\newtheorem{thm}{Theorem}[section]
\newtheorem{lem}[thm]{Lemma}
\newtheorem{cor}[thm]{Corollary}
\newtheorem{re}[thm]{Result}
\newtheorem{question}[thm]{Question}
\newtheorem{example}[thm]{Example}
\newtheorem{remark}[thm]{Remark}
\newenvironment{rmk}{\begin{remark} \em}{\end{remark}}
\newcommand{\Z}{\mathbb{Z}}
\DeclareMathOperator{\BH}{ {\rm BH} }
\begin{document}
\title{New Constructions of Group-Invariant \\ Butson Hadamard Matrices}
\author{Tai Do Duc\\ Division of Mathematical Sciences\\
School of Physical \& Mathematical Sciences\\
Nanyang Technological University\\
Singapore 637371\\
Republic of Singapore}
\date{}

\maketitle

 \begin{abstract}
Let $G$ be a finite group and let $h$ be a positive integer.
A $\BH(G,h)$  matrix is a $G$-invariant $|G|\times |G|$ matrix $H$ whose entries are complex $h$th
roots of unity such that $HH^*=|G|I_{|G|}$, where $H^*$ denotes the complex conjugate
transpose of $H$, and  $I_{|G|}$ denotes the identity matrix of order $|G|$.
In this paper, we give three new constructions of $\BH(G,h)$ matrices. 
The first construction is the first known family of $\BH(G,h)$ matrices
in which $G$ does not need to be abelian. 
The second and the third constructions are two families of $\BH(G,h)$ matrices in which $G$ is a finite local ring. 
\end{abstract}

\bigskip

\section{Introduction}

Let $n$ and $h$ be positive integers.
An $n\times n$-matrix $H$ whose entries are complex $h$th roots of unity is called a \textbf{Butson Hadamard matrix} if $HH^* = nI_n$,
where $H^*$ is the complex conjugate transpose of $H$ and $I_n$ is the identity matrix of order $n$.
We say that $H$ is a {\boldmath$\BH(n,h)$} \textbf{matrix}.
The focus of this paper is Butson Hadamard matrices which are invariant under the action of finite groups.
Let $G$ be a finite group. A $|G|\times |G|$ matrix $A=(a_{g,k})_{g,k\in G}$ is
{\boldmath $G$}\textbf{-invariant} if $a_{gl,kl}=a_{g,k}$ for all $g,k,l\in G$.
A $G$-invariant $\BH(|G|,h)$ matrix is called a {\boldmath$\BH(G,h)$} \textbf{matrix}.
The topic of group-invariant Butson Hadamard matrices encompasses many combinatorial objects such as 
generalized Hadamard matrices, generalized Bent functions, abelian splitting semi-regular relative difference sets,
cyclic $n$-roots, perfect sequences and perfect arrays, see \cite{schmidt1} and \cite{duc2} for more details.
\medskip

Let $\exp(G)$ denote the least common multiple of the orders of the elements of $G$.
Let $\nu_p(x)$ denote the $p$-adic valuation of the integer $x$.
Using bilinear forms over finite abelian groups, we \cite[Theorem 2.1]{duc1} constructed $\text{BH}(G,h)$ matrices in which $G$ is a finite abelian group and
\begin{itemize}
\item[(i)] $\nu_p(h) \geq \lceil \nu_p(\exp(G)/2\rceil$ for every prime divisor $p$ of $|G|$, and
\item[(ii)] $\nu_2(h) \geq 2$ if $\nu_2(|G|)$ is odd and $G$ has a direct factor $\mathbb{Z}_2$. 
\end{itemize}
The conditions (i) and (ii) are also necessary conditions for the existence of $\text{BH}(G,h)$ matrices in the case $G$ is a cyclic group of prime-power order, see \cite[Theorem 3.6]{duc1}. Furthermore, \cite[Theorem 2.1]{duc1} is the state-of-the-art class of group-invariant Butson Hamard matrices which includes previously known constructions in \cite{bac} and \cite{lau}.

\medskip

In the case $G$ is a cyclic group, most known necessary conditions, see \cite{duc2}, on the existence of $\text{BH}(G,h)$ matrices involve a condition on $\gcd(|G|,h)$. However when $G$ is not a cyclic group, the situation turns out to be different. As the last two constructions in this paper suggest, the conditions required for these constructions to work are the existence of various vanishing sums of roots of unity.

\medskip

In this paper, we construct three new classes of $\BH(G,h)$ matrices. Our first construction is a class of $\BH(G,h)$ matrices which does not require $G$ to be abelian, but requires $G$ to contain a \textit{large enough} normal cyclic subgroup. The second and the third constructions use ideas in the studies of relative difference sets in finite local rings by Leung et. al. \cite{leu1, leu2}.

\medskip

To end this section, we state the following result by Lam and Leung \cite{lam} on the existence of vanishing sums of roots of unity.

\begin{re}  \label{Lam-Leung} 
Let $n$ and $h$ be positive integers and let $h=\prod_{i=1}^r p_i^{e_i}$ be the prime factorization of $h$. If $\eta_1,\dots, \eta_n$ are $h$th roots of unity such that $\sum_{i=1}^n\eta_i=0$, then $n\in p_1\mathbb{N}+\cdots+p_r\mathbb{N}$.
\end{re}

\section{Preliminaries}

\subsection{Group-invariant Butson Hadamard matrices and group-ring equations}
As it turns out, the properties of group-invariant Butson Hadamard matrices can be wrapped in a single group-ring equation. The theory of group rings and characters is pivotal in our study. 

\medskip

Let $G$ be a finite group, let $R$ be a number ring and let
$R[G]$ denote the group ring of $G$ over $R$. The elements of $R[G]$ have
the form $X=\sum_{g \in G} a_gg$ with $a_g \in R$.
The numbers $a_g$ are called \textbf{coefficients} of $X$.
Two elements $X=\sum_{g \in G} a_gg$ and $Y=\sum_{g\in G}b_gg$ in $R[G]$
are equal if and only if $a_g=b_g$ for all $g\in G$.
A subset $S$ of $G$ is identified with the group ring element $\sum_{g\in S}g$.
For the identity element $1_G$ of $G$ and $\lambda\in R$,  we write $\lambda$
for the group ring element $\lambda 1_G$.

\medskip
For $h\in \Z^+$, we denote a primitive $h$th root of unity by $\zeta_h$. In our study, we focus on the ring $R=\Z[\zeta_h]$.
For $X=\sum_{g \in G}a_gg\in R[G]$, we write
$$
X^{(-1)}=\sum_{g\in G} \overline{a_g} g^{-1},
$$
where $\overline{a_g}$ denotes the complex conjugate of $a_g$.

\medskip

If $G$ is an abelian group, then we can study the ring $R[G]$ using characters of $G$. 
We denote the group of complex characters of $G$ by $\hat{G}$.
The \textbf{trivial character} of $G$, denoted by $\chi_0$, is defined by $\chi_0(g)=1$ for all $g\in G$.
For $D=\sum_{g\in G} a_gg \in R[G]$ and $\chi \in \hat{G}$, we set $\chi(D)=\sum_{g\in G} a_g\chi(g)$.
The following result is the well known Fourier inversion formula for group ring theory and a proof can be found in \cite[Ch.\ VI, Lem. 3.5]{bethjung}.

\begin{re}[Fourier inversion] \label{fourier}
Let $G$ be a finite abelian group and let $D=\sum_{g\in G} a_gg \in \mathbb{C}[G]$.
Then $$a_g=\frac{1}{|G|}\sum_{\chi \in \hat{G}} \chi(Dg^{-1})$$
for all $g\in G$.
Consequently, if $D, E \in \mathbb{C}[G]$ and $\chi(D)=\chi(E)$ for all $\chi \in \hat{G}$, then $D=E$.
\end{re}

The following result \cite[Lemma 3.3]{duc1} translates the properties of a $\BH(G,h)$ matrix into an equation in $\mathbb{Z}[\zeta_h][G]$. We include a proof for the convenience of the reader.

\begin{re} \label{gr}
Let $G$ be a finite group, let $h$ be a positive integer,
and let $a_g$, $g\in G$, be complex $h$th roots of unity. Consider the element
$D = \sum_{g \in G}a_g g$ of $\Z[\zeta_h][G]$
and the matrix $H=(H_{g,k})$, $g,k\in G$, given by $H_{g,k}=a_{gk^{-1}}$.
Then $H$ is a $\BH(G,h)$ matrix if and only if
\begin{equation} \label{gr1}
DD^{(-1)}=|G|.
\end{equation}
\end{re}

\begin{proof}
It is clear that the matrix $H$ defined by $H_{g,k}=a_{gk^{-1}}$ is $G$-invariant, as $H_{gl,kl}=a_{gk^{-1}}=H_{g,k}$ for any $g,k,l\in G$. Let $g\in G$ be arbitrary. The coefficient of $g$ in $DD^{(-1)}$ is
\begin{equation*}
\sum_{k,l \in G \atop kl^{-1}=g}a_{k}\overline{a_l}
= \sum_{l\in G} a_{gl}\overline{a_l}.
\end{equation*}
On the other hand, let $x$ and $y$ be any two elements in $G$. Write $y=gx$, $g\in G$. The inner product of row $y$ and row $x$ of $H$ is
\begin{equation*}
\sum_{k\in G} H_{gx,k}\overline{H_{x,k}}
= \sum_{k\in G} a_{gxk^{-1}}\overline{a_{xk^{-1}}}
 = \sum_{l\in G} a_{gl}\overline{a_l}.
\end{equation*}
Therefore, the equation (\ref{gr1}) holds if and only if any two distinct rows of $H$ have inner product $0$,
that is, if and only if $H$ is a $\BH(G,h)$ matrix.
\end{proof}

\subsection{Our construction approach}
By Result \ref{gr}, the existence of a $\BH(G,h)$ matrix is equivalent to the existence of a group ring element $D=\sum_{g\in G} a_gg$ with the following properties.
\begin{itemize}
\item[(i)] Each $a_g$ is an $h$th root of unity.
\item[(ii)] $DD^{(-1)}=|G|$.
\end{itemize}
In our constructions, we imply a structure on $D$ by putting $D=\sum_{i=1}^t \eta_iG_i$, where
\begin{itemize}
\item[(a)] $G=\cup_{i=1}^t G_i$ (note that we usually choose $G_i$ as subgroups, or cosets of a subgroup, of $G$);
\item[(b)] Each $\eta_i$ is a complex $h$th root of unity.
\end{itemize}
Assume that we have $D=\sum_{i=1}^t \eta_iG_i$ with $\eta_i$ and $G_i$ satisfying conditions (a) and (b). We proceed to finding conditions for $\eta_i$ and $G_i$ such that (i) and (ii) are satisfied. Note that each $g\in G$ belongs to $\cap_{i\in I_g}G_i$ for some nonempty subset $I_g$ of $\{1,\dots,t\}$, so we need $\sum_{i\in I_g} \eta_i=\eta_g$, where $\eta_g$ is an $h$th root of unity. This results in various vanishing sums of roots of unity whose existence is determined by Result \ref{Lam-Leung}. The difficulty now lies in finding $G_i$ such that $DD^{(-1)}=|G|$. An example where this happens is the folowing case: $G$ is a finite abelian group such that for any character $\chi$ of $G$, we have
$$|\chi(G_i)|=\begin{cases} \ 0 \ \text{for any} \ i \in \{1,\dots,t\}\setminus\{j\}, \\ \sqrt{|G|} \ \text{if} \ i=j.\end{cases}$$
Assuming this case, we obtain $\chi(D)=\eta_j\chi(G_j)$ for any $\chi\in \hat{G}$. So $\chi(DD^{(-1)})=|\chi(G_j)|^2=|G|$ for any $\chi\in \hat{G}$, which implies $DD^{(-1)}=|G|$ by Result \ref{fourier}.

\medskip

The idea above will be used in our first and second constructions. The third construction utilizes a similar idea with some modification.

\bigskip

\section{{\boldmath $\BH(G,h)$} Matrices with {\boldmath $G$} non-abelian}
The main idea in the construction of this section comes from modifying the \textit{building set} idea used in the study of relative difference sets, 
see \cite{dav} for more details about building sets.

\medskip

For positive integers $t$ and $h$, we denote the cyclic group of order $t$ by $\Z_t$ and a primitive $h$th root of unity by $\zeta_h$. 
The following lemma is the main ingredient for our construction.

\begin{lem} \label{building blocks}
Let $n$ be a positive integer and let $h$ be the smallest positive integer such that
\begin{equation} \label{relation n and h}
h^2\equiv 0\pmod{n} \ \text{and} \ (\nu_2(n),\nu_2(h))\neq (1,1).
\end{equation}  
Define
\begin{equation} \label{size}
k=\begin{cases} n/h \ \text{if} \ \nu_2(n)\neq 1, \\ 2n/h \ \text{otherwise}. \end{cases}
\end{equation}
Then there exists a set of $k$ group-ring elements $D_i \in \mathbb{Z}[\zeta_h][\mathbb{Z}_{n/k}]$, $0\leq i\leq k-1$, which has the following properties.
\begin{itemize}
\item[(a)] $D_i=\sum_{g\in \mathbb{Z}_{n/k}}a_{ig}g$, where each $a_{ig}$ is a complex $h^{th}$ root of unity.
\item[(b)] $D_iD_j^{(-1)}=0$ for any $0\leq i\neq j\leq k-1$ and $\sum_{i=0}^{k-1}D_iD_i^{(-1)}=n$.
\end{itemize}
\end{lem}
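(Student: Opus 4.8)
The plan is to reduce to prime powers and then build the blocks by hand, controlling everything through the characters of $\mathbb{Z}_{n/k}$. I would first record explicit formulas: writing $n=\prod_i p_i^{e_i}$, one reads off from (\ref{relation n and h}) and (\ref{size}) that $h=\prod_i p_i^{\lceil e_i/2\rceil}$ and $k=\prod_i p_i^{\lfloor e_i/2\rfloor}$, except that if $\nu_2(n)=1$ then $h=2\prod_i p_i^{\lceil e_i/2\rceil}$ (with the same $k$); in all cases $m:=n/k=\prod_i p_i^{\lceil e_i/2\rceil}$, which divides $h$. Thus $h,k,m$ are multiplicative over the coprime prime-power parts of $n$. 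If $n=n_1n_2$ with $\gcd(n_1,n_2)=1$ and $\{D_i^{(1)}\}$, $\{D_j^{(2)}\}$ are blocks for $n_1,n_2$, then the tensor products $D_{(i,j)}:=D_i^{(1)}\otimes D_j^{(2)}$, viewed in $\mathbb{Z}[\zeta_h][\mathbb{Z}_{m_1}\times\mathbb{Z}_{m_2}]\cong\mathbb{Z}[\zeta_h][\mathbb{Z}_m]$, are blocks for $n$: each coefficient is a product of an $h_1$-th and an $h_2$-th root of unity, hence an $h$-th root; and since $(X\otimes Y)^{(-1)}=X^{(-1)}\otimes Y^{(-1)}$ with multiplication coordinatewise, $D_{(i,j)}D_{(i',j')}^{(-1)}=0$ unless $(i,j)=(i',j')$, while $\sum_{i,j}D_{(i,j)}D_{(i,j)}^{(-1)}=\bigl(\sum_i D_i^{(1)}(D_i^{(1)})^{(-1)}\bigr)\otimes\bigl(\sum_j D_j^{(2)}(D_j^{(2)})^{(-1)}\bigr)=n_1\otimes n_2=n$. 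Iterating, it suffices to construct the blocks when $n=q$ is a prime power.

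By Result \ref{fourier} a group-ring identity over $\mathbb{Z}_m$ can be verified after applying every character, and $\chi(D)\overline{\chi(E)}=\chi(DE^{(-1)})$, so (b) is equivalent to the statement that for every character $\chi$ of $\mathbb{Z}_m$ there is exactly one index $i$ with $\chi(D_i)\neq 0$, and $|\chi(D_i)|=\sqrt q$ for that $i$ (the trivial character then reproduces the constant $q$ on the right). Take first $q=p^e$ with $p$ odd. If $e=2f$ then $m=k=p^f$, and I would let $D_i=\sum_{g\in\mathbb{Z}_{p^f}}\overline{\chi_i(g)}\,g$ as $\chi_0,\dots,\chi_{p^f-1}$ run over all characters of $\mathbb{Z}_{p^f}$; then $\psi(D_i)=p^f$ if $\psi=\chi_i$ and $0$ otherwise, and $(p^f)^2=q$. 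If $e=2f+1$ then $m=p^{f+1}$, $k=p^f$; set $K=p\,\mathbb{Z}_{p^{f+1}}$ (order $p^f$) and $E=\sum_{j=0}^{p-1}\zeta_p^{j^2}\,j$, the group-ring element on the transversal $\{0,\dots,p-1\}$ of $K$ coming from the quadratic Gauss sum. Put $D_0=KE$, and for $0\le a\le p^f-1$ let $D_a$ be $D_0$ with its coefficient at $x$ multiplied by $\zeta_{p^{f+1}}^{ax}$; this preserves (a) and shifts character indices, $\chi_t(D_a)=\chi_{t+a}(D_0)$. Now $\chi_t(D_0)=\chi_t(K)\chi_t(E)$, where $\chi_t(K)=p^f$ for $p^f\mid t$ and $0$ otherwise, and for $p^f\mid t$ the value $\chi_t(E)$ is a quadratic Gauss sum over $\mathbb{Z}_p$ of modulus $\sqrt p$ (this uses $p$ odd). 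Hence $\chi_t(D_0)$ is nonzero precisely on the order-$p$ subgroup $p^f\mathbb{Z}_{p^{f+1}}$, with modulus $p^f\sqrt p=\sqrt q$, so $\chi_t(D_a)\neq 0$ exactly for $t\in -a+p^f\mathbb{Z}_{p^{f+1}}$; as $a$ ranges over $\{0,\dots,p^f-1\}$ these $p^f$ cosets partition the dual group, which is what we need.

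For $q=2^e$ the only thing that breaks is that the quadratic Gauss sum over $\mathbb{Z}_2$, namely $1+\zeta_2$, vanishes --- which is exactly why the hypotheses exclude $(\nu_2(n),\nu_2(h))=(1,1)$ and enlarge $k$ by a factor $2$ when $\nu_2(n)=1$. For $q=2$ (so $h=4$, $k=1$, $m=2$) the single block $D_0=0+\zeta_4\cdot 1\in\mathbb{Z}[\zeta_4][\mathbb{Z}_2]$ works, since $\chi_t(D_0)\in\{1+\zeta_4,\,1-\zeta_4\}$ has modulus $\sqrt 2$. For $e=2f\ge 2$ even, the conjugated-characters construction above applies verbatim (now $h=2^f$). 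For $e=2f+1\ge 3$ odd, I would rerun the odd-exponent argument with $E$ replaced by the $\mathbb{Z}_4$-gadget $E=\sum_{j=0}^{3}\zeta_4^{j^2}\,j$ on the transversal $\{0,1,2,3\}$ of $K=4\,\mathbb{Z}_{2^{f+1}}$ (order $2^{f-1}$): a direct evaluation of the four relevant values shows $E$ has modulus $2\sqrt 2$ on half of them and $0$ on the other half, so it concentrates on half the characters for which $\chi_t(K)\neq 0$, with modulus scaled so that $2^{f-1}\cdot 2\sqrt 2=\sqrt q$. Setting $D_0=KE$ and twisting coefficients by $\zeta_{2^{f+1}}^{ax}$ for $a=0,\dots,2^f-1$ then yields blocks whose nonvanishing character sets are the $2^f$ cosets of $2^f\mathbb{Z}_{2^{f+1}}$, partitioning the dual group, each nonzero value having modulus $\sqrt q$. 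Combined with the reduction step this proves the lemma.

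I expect the $2$-power step to be the main obstacle: one has to identify the correct $2$-adic substitute for the quadratic Gauss sum, check that its character values have exactly the right support and modulus, and recognise that these requirements are precisely what force the conditions $h^2\equiv 0\pmod n$, $(\nu_2(n),\nu_2(h))\neq(1,1)$, and the doubling of $k$ when $\nu_2(n)=1$. A secondary but unavoidable chore is the bookkeeping of the various groups, subgroups and transversals so that every coefficient visibly lies in $\mathbb{Z}[\zeta_h]$.
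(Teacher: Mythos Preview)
Your argument is correct and takes a genuinely different route from the paper.

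The paper works directly in $\mathbb{Z}_{n/k}$ without any reduction to prime powers: in each of the three cases (according to $\nu_2(n)$) it writes down an explicit Gauss-sum-like formula
\[
D_i=\sum_{j}\zeta_h^{\alpha j^{2}+\beta ij}g^{j}
\]
(with $\alpha,\beta$ depending on the case), splits the sum via $j=\ell x+y$, and then appeals to Lemma~\ref{gauss sum type} to evaluate the surviving inner sum. Your approach instead factors through the Chinese Remainder Theorem: you first observe that $h,k,m=n/k$ are multiplicative over the prime-power parts of $n$ and that the tensor product of blocks for coprime factors is again a system of blocks, reducing to $n=p^{e}$; then for even exponent you use the very clean ``row of the character table'' blocks $D_i=\sum_g\overline{\chi_i(g)}g$ (no Gauss sum at all), and for odd exponent you build $D_0=KE$ from a subgroup $K$ and a small Gauss-sum gadget $E$ on a transversal, then twist by linear characters.

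What each approach buys: the paper's single-formula construction avoids the CRT bookkeeping and treats composite $n$ uniformly, at the price of carrying the parameters $k,\ell$ through the three case computations. Your decomposition is more modular and makes the structure transparent---in particular the even-exponent case becomes a one-line orthogonality-of-characters observation---but you pay for it with the tensor-product reduction and a separate treatment of each prime power. Both routes ultimately rest on the same quadratic Gauss-sum evaluations and on the same character-theoretic reformulation of condition~(b) (exactly one nonvanishing $\chi(D_i)$ per character, of modulus $\sqrt{n}$), which the paper also records at the start of its proof.
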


To prove Lemma \ref{building blocks}, we need the following result on square norms of algebraic integers. The first part of this result is a familiar property of quadratic Gauss sums. We include proofs for both parts for the convenience of the reader.

\begin{lem} \label{gauss sum type}
Let $n$ be an odd positive integer and let $b$ be an integer. 
\begin{itemize}
\item[(a)] If $X\in \mathbb{Z}[\zeta_n]$ has form $X=\sum_{i=0}^{n-1} \zeta_n^{i^2+bi}$, then
$$X\bar{X}=n.$$
\item[(b)] If $X \in \mathbb{Z}[\zeta_{4n}]$ has form $X=\sum_{i=0}^{2n-1} \zeta_{4n}^{i^2+2bi}$, then
$$X\bar{X}=2n.$$
\end{itemize} 
\end{lem}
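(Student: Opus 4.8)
The plan is to treat the two parts by essentially the same device: in each case, $X\bar X$ is a sum over a difference set of indices, and after the substitution that reduces the double sum to a single sum, the inner quadratic exponential either collapses to a root of unity times a full character sum (which vanishes) or, in the diagonal case, contributes the norm. Let me describe part (a) first. Writing $X\bar X=\sum_{i,j}\zeta_n^{(i^2+bi)-(j^2+bj)}$ and substituting $i=j+\ell$, the exponent becomes $(j+\ell)^2+b(j+\ell)-j^2-bj = \ell^2 + 2\ell j + b\ell = (\ell^2+b\ell) + 2\ell j$. Summing over $j$ first, for each fixed $\ell$ we get $\zeta_n^{\ell^2+b\ell}\sum_{j=0}^{n-1}\zeta_n^{2\ell j}$. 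Since $n$ is odd, $2$ is a unit modulo $n$, so $\sum_j \zeta_n^{2\ell j}$ equals $n$ when $\ell\equiv 0$ and $0$ otherwise. Hence only $\ell=0$ survives, giving $X\bar X = \zeta_n^{0}\cdot n = n$. This is the familiar computation behind the absolute value of a quadratic Gauss sum, and it goes through cleanly because oddness of $n$ makes the linear-in-$j$ coefficient $2\ell$ invertible.

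For part (b) the same substitution is the starting point, but now the modulus is $4n$ and the index runs over $0\le i\le 2n-1$, so the collapse of the inner sum is more delicate and this is where the real work lies. Writing $X\bar X = \sum_{i,j=0}^{2n-1}\zeta_{4n}^{(i^2+2bi)-(j^2+2bj)}$ and putting $i=j+\ell$ with $\ell$ ranging appropriately, the exponent becomes $\ell^2 + 2b\ell + 2\ell j$, so for fixed $\ell$ we face $\zeta_{4n}^{\ell^2+2b\ell}\sum_{j}\zeta_{4n}^{2\ell j} = \zeta_{4n}^{\ell^2+2b\ell}\sum_j \zeta_{2n}^{\ell j}$, where $j$ now runs over a range of length $2n$ but with a wraparound correction coming from the fact that $i=j+\ell$ must be reduced modulo $2n$ and $\zeta_{4n}^{2n}=-1$. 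The plan is to handle this carefully: split according to whether $\ell$ is even or odd. If $\ell$ is even, $\sum_{j=0}^{2n-1}\zeta_{2n}^{\ell j}$ is a genuine character sum on $\Z_{2n}$ and vanishes unless $\ell\equiv 0\pmod{2n}$; the wraparound sign contributions must be tracked, but since $n$ is odd one checks they either cancel or reinforce consistently, leaving only the diagonal term $\ell=0$, which contributes $2n$. If $\ell$ is odd, one shows the corresponding sum vanishes — here the sign $\zeta_{4n}^{2n}=-1$ incurred by the wraparound is exactly what is needed, so that $\sum_j$ over the shifted range equals $\sum_{j=0}^{2n-1}\zeta_{2n}^{\ell j}$ minus twice the tail, and the oddness of $n$ (hence of $2n/\gcd(2n,\ell)$ being handled correctly) forces cancellation. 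Assembling the surviving term gives $X\bar X = 2n$.

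The main obstacle I anticipate is precisely the bookkeeping in part (b): unlike part (a), the index set $\{0,\dots,2n-1\}$ is not a complete residue system modulo $4n$, so after the shift $i=j+\ell$ the index $i$ wraps around modulo $2n$ rather than $4n$, and each wraparound flips the sign of the summand because $\zeta_{4n}^{2n}=-1$. Getting these signs exactly right — and confirming that for odd $\ell$ they produce total cancellation while for the diagonal they do not interfere — is the delicate step. A clean way to organize it is to write $i^2+2bi$ as a function on $\Z_{2n}$ valued in $\{\pm 1\}\cdot(\text{root of unity})$ by noting $(i+2n)^2+2b(i+2n) \equiv i^2+2bi + 4n i + 4n^2 + 4bn \pmod{8n}$ and tracking the parity of the coefficient of $2n$ in the exponent; alternatively one can appeal directly to the classical evaluation of the quadratic Gauss sum $\sum_{i=0}^{2n-1}\zeta_{4n}^{i^2}$ for odd $n$ and reduce the general $b$ to this by completing the square, using that $2$ is invertible mod $n$. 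Either route reduces part (b) to a known Gauss-sum fact, so once the indexing is pinned down the conclusion $X\bar X = 2n$ follows.
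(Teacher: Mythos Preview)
Your argument for part~(a) is exactly the paper's: expand $X\bar X$ as a double sum, shift one index, and use that $2$ is a unit modulo the odd $n$ so that only the diagonal survives.

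For part~(b) your approach is \emph{different} from the paper's, and in fact simpler than you seem to realize. The sign bookkeeping you anticipate is a phantom: the map $i\mapsto i^2+2bi \pmod{4n}$ already has period $2n$, since
\[
(i+2n)^2+2b(i+2n)-(i^2+2bi)=4n(i+n+b)\equiv 0 \pmod{4n}.
\]
Hence there is no ``wraparound flips the sign'' phenomenon at all; the substitution $i=j+\ell$ with $\ell$ taken modulo $2n$ is legitimate as written, and your inner sum $\sum_{j=0}^{2n-1}\zeta_{2n}^{\ell j}$ vanishes for every $\ell\not\equiv 0\pmod{2n}$ and equals $2n$ for $\ell=0$, giving $X\bar X=2n$ in one line. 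So your route works, but the discussion of even/odd $\ell$ and tail corrections should be deleted.

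The paper proceeds differently: it writes $i=2j+k$ with $k\in\{0,1\}$, obtaining $X=Y+\zeta_{4n}^{2b+1}Z$ where $Y=\sum_{j}\zeta_n^{j^2+bj}$ and $Z=\sum_{j}\zeta_n^{j^2+(b+1)j}$ are sums of type~(a). Part~(a) gives $Y\bar Y=Z\bar Z=n$; an explicit evaluation shows $Z\bar Y=n\zeta_{4n}^{(n^2-1)+2b(n-1)}$, and then the two cross terms in $X\bar X$ combine to $n(\zeta_{4n}^{n^2+2bn}+\zeta_{4n}^{-n^2-2bn})=0$ because $n^2+2bn\equiv n\pmod{2n}$ for odd $n$. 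Your direct method is shorter; the paper's decomposition has the modest advantage of reducing~(b) to~(a) structurally and making visible where the factor $\zeta_{4n}$ (rather than $\zeta_{2n}$) is genuinely needed.
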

\begin{proof}
(a) We have 
$$X\bar{X} = \sum_{i=0}^{n-1} \sum_{j=0}^{n-1}\zeta_n^{(i+j)^2+b(i+j)-i^2-bi}= \sum_{j=0}^{n-1}\sum_{i=0}^{n-1}\zeta_n^{j^2+bj+2ji}$$
As $n$ is odd, we have $\sum_{i=0}^{n-1}\zeta_n^{2ji}=0$ for any $j\neq 0$. Hence $X\bar{X}=n$.

\medskip

(b) For $0 \leq i \leq 2n-1$, write $i=2j+k$ with $k\in\{0,1\}$ and $j\in\{0,\dots, n-1\}$. We obtain
\begin{eqnarray*}
X &=& \sum_{k=0}^1 \sum_{j=0}^{n-1} \zeta_{4n}^{4j^2+4jk+k^2+4bj+2bk} \\
 &=& \sum_{j=0}^{n-1}\zeta_n^{j^2+bj}+\zeta_{4n}^{2b+1}\sum_{j=0}^{n-1}\zeta_n^{j^2+(b+1)j} 
\end{eqnarray*}
Put $Y=\sum_{j=0}^{n-1}\zeta_n^{j^2+bj}$ and $Z=\sum_{j=0}^{n-1}\zeta_n^{j^2+(b+1)j}$. Then
\begin{equation} \label{decomposition of X}
X= Y+ \zeta_{4n}^{2b+1}Z
\end{equation} 
By part (a), we have
\begin{equation} \label{property of Y,Z}
Y\bar{Y}=Z\bar{Z}=n.
\end{equation} 
Moreover, we have
\begin{eqnarray*}
Z\bar{Y}&=&\sum_{i=0}^{n-1}\sum_{j=0}^{n-1}\zeta_n^{(i+j)^2+(b+1)(i+j)-i^2-bi} \\
&=&\sum_{j=0}^{n-1}\zeta_n^{j^2+(b+1)j}\sum_{i=0}^{n-1}\zeta_n^{(2j+1)i} \\
&=& n\zeta_{4n}^{(n^2-1)+2b(n-1)}.
\end{eqnarray*}
Consequently, we have $Y\bar{Z}=n\zeta_{4n}^{(1-n^2)+2b(1-n)}$. Combining with (\ref{decomposition of X}) and (\ref{property of Y,Z}), we obtain
\begin{eqnarray*} 
X\bar{X}&=&Y\bar{Y}+Z\bar{Z}+\zeta_{4n}^{-2b-1}Y\bar{Z}+\zeta_{4n}^{2b+1}Z\bar{Y} \\
&=& 2n+n(\zeta_{4n}^{-n^2-2bn}+\zeta_{4n}^{n^2+2bn}) \\
&=& 2n.
\end{eqnarray*}
\end{proof}

\medskip

\begin{proof}[Proof of Lemma \ref{building blocks}]
By Result \ref{fourier}, the property (b) of Lemma \ref{building blocks} is equivalent to saying that for any character $\chi$ of $\mathbb{Z}_{n/k}$, we have $\chi(D_i)\neq 0$ for exactly one $i\in\{0,\dots,k-1\}$ and $\chi(D_iD_i^{(-1)})=n$ for this value of $i$.

\medskip

Let $g$ be a generator of $\mathbb{Z}_{n/k}$.
Note that the group of characters of $\Z_{n/k}$ is $\{\chi_t: t=0,\dots,n/k-1\}$, where each $\chi_t: \Z_{n/k}\rightarrow \mathbb{C}$ is defined by $\chi_t(g^i)=\zeta_{n/k}^{ti}$ for any $i=0,\dots,n/k-1$.
Let $m$ be a fixed positive integer which is coprime to $n$.
There are three cases concerning the parity of $\nu_2(n)$ and whether $\nu_2(n) =1$. \\
\textbf{Case 1.} $\nu_2(n)$ is even.

\noindent In this case, we have $h=n/k$ and $\mathbb{Z}_{n/k}=\mathbb{Z}_h$. For each $0\leq i\leq k-1$, define
$$D_i=\sum_{j=0}^{h-1}\zeta_h^{j^2k+mij}g^j\in \mathbb{Z}[\zeta_h][\mathbb{Z}_h].$$
As $h$ is the smallest positive integer with the property (\ref{relation n and h}), we have $\nu_2(h)=\nu_2(n)/2$. Hence, the number $l=h/k=h^2/n$ is odd. Let $\chi$ be any character of $\mathbb{Z}_h$. Assume $\chi(g)=\zeta_h^t$ for some $t \in \mathbb{Z}$. We have $$\chi(D_i)=\sum_{j=0}^{h-1}\zeta_h^{j^2k+j(mi+t)}.$$
Note that $h=lk$. Write $j=lx+y$ with $0\leq x \leq k-1$ and $0\leq y \leq l-1$. We obtain
$$\chi(D_i)=\sum_{y=0}^{l-1}\sum_{x=0}^{k-1}\zeta_h^{y^2k+y(mi+t)+lx(mi+t)}=\sum_{y=0}^{l-1} \zeta_h^{y^2k+y(mi+t)}\sum_{x=0}^{k-1}\zeta_k^{x(mi+t)}.$$
Hence
$$\chi(D_i)=\begin{cases} 0 \ \text{if} \ mi+t \not\equiv 0 \pmod{k},\\ 
 k\sum_{y=0}^{l-1} \zeta_l^{y^2+y(mi+t)/k} \ \text{if} \ mi+t\equiv 0 \pmod{k}. \end{cases}$$
 As $(m,k)=1$, there is only one $i\in \{0,\dots,k-1\}$ with $mi+t\equiv 0 \pmod{k}$. Thus, there is only one $D_i\in \{D_0,\dots,D_{k-1}\}$ such that $\chi(D_i)\neq 0$. Moreover by Lemma \ref{gauss sum type}.(a), the sum $\sum_{y=0}^{l-1} \zeta_l^{y^2+y(mi+t)/k}$ has square norm equal to $l$. We obtain
 $$\chi(D_iD_i^{(-1)})=|\chi(D_i)|^2=k^2l=n.$$

\medskip

\noindent \textbf{Case 2.} $\nu_2(n)$ is odd and $\nu_2(n) \geq 3$. 

\noindent In this case, we have $h=n/k$ and $\mathbb{Z}_{n/k}=\mathbb{Z}_h$. For each $0\leq i\leq k-1$, define
$$D_i=\sum_{j=0}^{h-1}\zeta_h^{j^2k/2+mij}g^j\in \mathbb{Z}[\zeta_h][\mathbb{Z}_h].$$
As $h$ is the smallest positive integer with the property (\ref{relation n and h}), we have $\nu_2(h)=(\nu_2(n)+1)/2$. Note that the group-ring elements $D_i$ are well defined, as $k=n/h\equiv 0\pmod{2}$. Moreover, the number $l=h/(2k)=h^2/(2n)$ is odd. Let $\chi$ be any character of $\mathbb{Z}_h$. Assume $\chi(g)=\zeta_h^t$ for some $t\in \mathbb{Z}$. We have 
$$\chi(D_i)=\sum_{j=0}^{h-1}\zeta_h^{j^2k/2+j(mi+t)}.$$
Note that $h=2lk$. Write $j=2lx+y$ with $0\leq x \leq k-1$ and $0\leq y \leq 2l-1$. We obtain
$$\chi(D_i)=\sum_{y=0}^{2l-1}\sum_{x=0}^{k-1}\zeta_h^{y^2k/2+y(mi+t)+2lx(mi+t)}
=\sum_{y=0}^{2l-1} \zeta_h^{y^2k/2+y(mi+t)}\sum_{x=0}^{k-1}\zeta_k^{x(mi+t)}.$$
Hence
$$\chi(D_i)=\begin{cases} 0 \ \text{if} \ mi+t \not\equiv 0 \pmod{k},\\ 
 k\sum_{y=0}^{2l-1} \zeta_{4l}^{y^2+2y(mi+t)/k} \ \text{if} \ mi+t\equiv 0 \pmod{k}. \end{cases}$$
 There is only one $i\in \{0,\dots,k-1\}$ with $mi+t\equiv 0 \pmod{k}$, so there is only one $D_i\in \{D_0,\dots,D_{k-1}\}$ such that $\chi(D_i)\neq 0$. Moreover by Lemma \ref{gauss sum type}.(b), the sum $\sum_{y=0}^{2l-1} \zeta_{4l}^{y^2+2y(mi+t)/k}$ has square norm equal to $2l$. We obtain
 $$\chi(D_iD_i^{(-1)})=|\chi(D_i)|^2=k^2(2l)=n.$$
 
 \medskip
 
\noindent \textbf{Case 3.} $\nu_2(n)=1$. 

\noindent In this case, we have $h=2n/k$ and $\mathbb{Z}_{n/k}=\mathbb{Z}_{h/2}$. For each $0\leq i\leq k-1$, define
$$D_i=\sum_{j=0}^{h/2-1}\zeta_h^{j^2k+2mij}g^j\in \mathbb{Z}[\zeta_h][\mathbb{Z}_{h/2}].$$
By (\ref{relation n and h}), we have $\nu_2(h)=2$. Hence, the number $l=h/(4k)=h^2/(8n)$ is odd. 
Let $\chi$ be any character of $\mathbb{Z}_{h/2}$. Assume $\chi(g)=\zeta_{h/2}^t=\zeta_h^{2t}$ for some $t\in \mathbb{Z}$. 
We have 
$$\chi(D_i)=\sum_{j=0}^{h/2-1}\zeta_h^{j^2k+2j(mi+t)}.$$
Note that $h=4lk$. Write $j=2lx+y$ with $0\leq x \leq k-1$ and $0\leq y \leq 2l-1$. We obtain
$$\chi(D_i)=\sum_{y=0}^{2l-1}\sum_{x=0}^{k-1}\zeta_h^{y^2k+2y(mi+t)+4lx(mi+t)}
=\sum_{y=0}^{2l-1} \zeta_h^{y^2k+2y(mi+t)}\sum_{x=0}^{k-1}\zeta_k^{x(mi+t)}.$$
Hence
$$\chi(D_i)=\begin{cases} 0 \ \text{if} \ mi+t \not\equiv 0 \pmod{k},\\ 
 k\sum_{y=0}^{2l-1} \zeta_{4l}^{y^2+2y(mi+t)/k} \ \text{if} \ mi+t\equiv 0 \pmod{k}. \end{cases}$$
 There is only one $i\in \{0,\dots,k-1\}$ which satisfies $mi+t \equiv 0 \pmod{k}$, so there is only one $D_i\in \{D_0,\dots,D_{k-1}\}$ such that $\chi(D_i)\neq 0$. Moreover by Lemma \ref{gauss sum type}.(b), the sum $\sum_{y=0}^{2l-1} \zeta_{4l}^{y^2+2y(mi+t)/k}$ has square norm equal to $2l$. We obtain
 $$\chi(D_iD_i^{(-1)})=|\chi(D_i)|^2=k^2(2l)=n.$$

\end{proof}

\begin{thm} \label{nonabel}
Let $n$ and $h$  positive integers with the property 
\begin{equation} \label{padic relation}
h^2\equiv 0\pmod{n} \ \text{and} \ (\nu_2(h),\nu_2(n)) \neq (1,1).  \tag{$\ast$}
\end{equation}
Define the positive divisor $k$ of $n$ as follows.
\begin{equation} \label{size of blocks}
k=\begin{cases} n/h \ \text{if} \ \nu_2(n)\neq 1, \\ 2n/h \ \text{otherwise}.\end{cases}
\end{equation}
Suppose that $G$ is a group of order $n$ which contains $\mathbb{Z}_{n/k}$ as a normal subgroup. Then a $\BH(G,h)$ matrix exists.
\end{thm}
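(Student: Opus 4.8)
The plan is to use Lemma~\ref{building blocks} to produce the building blocks $D_0,\dots,D_{k-1}\in\Z[\zeta_h][\Z_{n/k}]$ and then assemble them into a single group-ring element $D\in\Z[\zeta_h][G]$ satisfying the equation $DD^{(-1)}=|G|$ of Result~\ref{gr}. Write $N=\Z_{n/k}\trianglelefteq G$, so that $[G:N]=k$. Choose a transversal $\{t_0,\dots,t_{k-1}\}$ of $N$ in $G$, i.e.\ $G=\bigcup_{i=0}^{k-1} t_iN$ is a disjoint union of cosets. The natural guess is to set
\begin{equation} \label{def of D nonabel}
D=\sum_{i=0}^{k-1} t_i D_i,
\end{equation}
where on the right-hand side $D_i=\sum_{g\in N}a_{ig}g$ is read as an element of $\Z[\zeta_h][G]$ via the inclusion $N\hookrightarrow G$. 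By property (a) of Lemma~\ref{building blocks}, every coefficient of $D$ is an $h$th root of unity, since the cosets $t_iN$ are disjoint and each contributes the coefficients $a_{ig}$; this gives property (i) of the construction approach. It remains to verify $DD^{(-1)}=n$.

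First I would compute $D^{(-1)}$. Since $(t_iD_i)^{(-1)}=D_i^{(-1)}t_i^{-1}$ and $D_i^{(-1)}=\sum_{g\in N}\overline{a_{ig}}\,g^{-1}$ still lies in $\Z[\zeta_h][N]$, we get $D^{(-1)}=\sum_{j=0}^{k-1} D_j^{(-1)}t_j^{-1}$. Therefore
\begin{equation} \label{DDinv expansion}
DD^{(-1)}=\sum_{i=0}^{k-1}\sum_{j=0}^{k-1} t_i D_i D_j^{(-1)} t_j^{-1}.
\end{equation}
Here is the key point where normality of $N$ enters: because $N$ is normal, conjugation by $t_j$ is an automorphism $\varphi_j$ of $N$, so $D_iD_j^{(-1)}\in\Z[\zeta_h][N]$ and $t_i\,(D_iD_j^{(-1)})\,t_j^{-1}=t_it_j^{-1}\cdot\varphi_j\!\left(D_iD_j^{(-1)}\right)$, which is supported on the coset $(t_it_j^{-1})N$. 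For $i=j$ this coset is $N$ itself, while for $i\neq j$ it is a nontrivial coset, so the two families of terms cannot interfere. Collecting the diagonal terms, $\sum_{i} t_i(D_iD_i^{(-1)})t_i^{-1}=\sum_i \varphi_i\!\left(D_iD_i^{(-1)}\right)$. Here I would hope to exploit that $D_iD_i^{(-1)}$, as an element of the \emph{abelian} group ring $\Z[\zeta_h][N]$, is fixed by every automorphism of $N$ — but this is not automatic. Instead the cleaner route is: by property (b) of Lemma~\ref{building blocks} applied with Result~\ref{fourier}, one knows $\chi(D_iD_i^{(-1)})$ for all characters $\chi$; the subtlety is that $D_iD_i^{(-1)}$ need not equal $n/|N|\cdot N$ (it does when $n/k$ is the relevant value, but in general it is whatever group-ring element has the prescribed character values). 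So I would argue directly: $\sum_{i=0}^{k-1} D_iD_i^{(-1)}=n=n\cdot 1_N$ in $\Z[\zeta_h][N]$ by (b), and $n\cdot 1_N$ \emph{is} fixed by every $\varphi_i$; hence I need the stronger fact that each individual $\varphi_i\!\left(D_iD_i^{(-1)}\right)$ reassembles correctly.

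The clean way around this obstacle is to not separate the diagonal terms prematurely but to evaluate the coefficient of each $g\in G$ in \eqref{DDinv expansion} directly, or better, to re-index: since $\{t_it_j^{-1}\}$ for fixed residue $t_it_j^{-1}N$ ranges as expected, one shows the coefficient of any $g\in N$ in $DD^{(-1)}$ equals $\sum_i (\text{coefficient of }\varphi_i^{-1}(g)\text{ in }D_iD_i^{(-1)})$, and then uses $\sum_i D_iD_i^{(-1)}=n\cdot 1_N$ together with the observation that $\varphi_i(1_N)=1_N$ to conclude this is $n$ if $g=1_G$ and $0$ otherwise; and for $g\notin N$ one shows every contributing term $t_it_j^{-1}\varphi_j(D_iD_j^{(-1)})$ with $i\neq j$ has coefficient of $g$ equal to the coefficient of $\varphi_j^{-1}(t_jt_i^{-1}g)$ in $D_iD_j^{(-1)}$, and these sum to $0$ because $D_iD_j^{(-1)}=0$ for $i\neq j$ by (b). Putting this together yields $DD^{(-1)}=n=|G|$, and Result~\ref{gr} then produces the desired $\BH(G,h)$ matrix. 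I expect the main obstacle to be precisely this bookkeeping with the coset cross-terms and making rigorous that the automorphisms $\varphi_j$ induced by conjugation do not destroy the orthogonality relations of (b) — the honest fix being that \eqref{DDinv expansion} is handled coefficient-by-coefficient rather than by naively pulling automorphisms through, since $D_iD_j^{(-1)}=0$ and $\sum_i D_iD_i^{(-1)}=n\cdot1_N$ are statements preserved under any automorphism of $N$.
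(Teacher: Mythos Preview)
Your approach and the paper's are the same in spirit—distribute the building blocks $D_i$ from Lemma~\ref{building blocks} over a transversal of $N=\Z_{n/k}$ in $G$ and verify $DD^{(-1)}=n$—but you place the transversal on the \emph{left}, $D=\sum_i t_iD_i$, whereas the paper places it on the \emph{right}, $D=\sum_i D_ix_i$. That seemingly cosmetic choice is exactly what creates the obstacle you flag and then do not actually overcome.

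With your left placement the cross terms $t_iD_iD_j^{(-1)}t_j^{-1}$ for $i\neq j$ vanish cleanly, since $D_iD_j^{(-1)}=0$ is computed in $\Z[\zeta_h][N]$ \emph{before} any conjugation enters. But the diagonal becomes $\sum_i t_i\bigl(D_iD_i^{(-1)}\bigr)t_i^{-1}=\sum_i\varphi_i\bigl(D_iD_i^{(-1)}\bigr)$, and your proposed fix does not close this. Knowing $\sum_iD_iD_i^{(-1)}=n\cdot1_N$ only yields $\varphi\bigl(\sum_iD_iD_i^{(-1)}\bigr)=n\cdot1_N$ for a \emph{single} automorphism $\varphi$; it says nothing about $\sum_i\varphi_i\bigl(D_iD_i^{(-1)}\bigr)$ when the $\varphi_i$ vary with $i$. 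Concretely, for $g\in N\setminus\{1\}$ the coefficient you must show vanishes is $\sum_i\bigl[\text{coeff.\ of }\varphi_i^{-1}(g)\text{ in }D_iD_i^{(-1)}\bigr]$, a sum of coefficients of $D_iD_i^{(-1)}$ at \emph{different} group elements $\varphi_i^{-1}(g)$; the identity $\sum_iD_iD_i^{(-1)}=n\cdot1_N$, which controls the sum of coefficients at one \emph{fixed} element, is simply not applicable. Your final sentence (``statements preserved under any automorphism of $N$'') conflates applying one automorphism to the whole sum with applying different automorphisms termwise.

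The paper sidesteps this entirely by writing $D=\sum_iD_ix_i$: then the diagonal is $\sum_iD_ix_ix_i^{-1}D_i^{(-1)}=\sum_iD_iD_i^{(-1)}=n$ with no conjugation at all, and it is the off-diagonal terms $D_ix_ix_j^{-1}D_j^{(-1)}$ that are handled via normality of $N$ together with $D_iD_j^{(-1)}=0$. So the repair of your write-up is to move the transversal to the right; after that, the diagonal computation is a one-liner, and there is no ``bookkeeping with automorphisms'' left to do on it. (You should also record, as the paper does, the harmless reduction to the minimal $h$ satisfying~\eqref{padic relation}, which is what Lemma~\ref{building blocks} literally assumes.)
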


\begin{proof}
Note that a $\BH(G,h)$ matrix is automatically a $\BH(G,h')$ matrix whenever $h\mid h'$, 
as a $h$th root of unity is a power of a $h'$ root of unity. 
Thus, we can assume that $h$ is the smallest positive integer with the property (\ref{padic relation}). 
Let $D_0,\dots,D_{k-1}$ be defined as in Lemma \ref{building blocks}.
Let $x_0,\dots,x_{k-1}$ be a full set of coset representatives of $\mathbb{Z}_{n/k}$ in $G$. Define
$$D=\sum_{i=0}^{k-1}D_ix_i.$$
Note that $D_i=\sum_{x\in \Z_{n/k}} a_{ix}x$. So $D=\sum_{i=0}^{k-1}\sum_{x\in \Z_{n/k}}a_{ix}xx_i$. Any element $h\in G$ has a unique expression $h=xx_i$ with $x\in \Z_{n/k}$ and $i\in \{0,\dots,k-1\}$. Hence the coefficient of $h$ in $D$ is $a_{ix}$, an $h$th root of unity by Lemma \ref{building blocks}.(a). By Result \ref{gr}, to show that $D$ gives a $\BH(G,h)$ matrix, it remains to verify that $DD^{(-1)}=n$.
We have 
$$DD^{(-1)}=\sum_{0\leq i,j\leq k-1} D_ix_ix_j^{-1}D_j^{(-1)}=\sum_{0\leq i,j\leq k-1} x_ix_j^{-1}D_iD_j^{(-1)},$$
where the second equality follows from the fact that $D_i\in \mathbb{Z}[\zeta_h][\mathbb{Z}_{n/k}]$ and $\mathbb{Z}_{n/k}$ is a normal subgroup of $G$.
By Lemma \ref{building blocks}.(b), we have $D_iD_j^{(-1)}=0$ whenever $i\neq j$. Hence
$$DD^{(-1)}=\sum_{i=0}^{k-1}D_iD_i^{(-1)}=n.$$
\end{proof}

\bigskip

\section{Construction in Finite Local Rings}

In this section, we adopt the notations and properties of finite local rings used by Leung and Ma \cite{leu1}. A ring $R$ is called a \textbf{finite local ring} if it contains a finite number of elements and it has a unique maximal ideal. Our goal in this section is to construct two classes of $\BH(R\times R,h)$ matrices, where $h$ is some positive integer. The following result \cite[Propositions 2.3, 2.4]{leu1} guarantees the existence of a finite local ring and describes its basic properties.

\begin{re} \label{existence}
Let $d,m,n$ be positive integers. Then for any prime $p$, there exists a finite local ring $R$ of order $p^{dn}$ with the following properties.
\begin{itemize}
\item[(i)] Its maximal ideal $I$ is generated by a prime element $\pi$;
\item[(ii)] $\pi^{n-1}\neq 0, \pi^n=0$ and $p=\pi^mu$, where $u$ is a unit in $R$;
\item[(iii)] $R/I$ is a finite field of order $p^d$;
\item[(iv)] Write $n=am+b$, $0\leq b\leq m-1$, $a\in \mathbb{N}$. The group $(R,+)$ is isomorphic to $\left(\Z_{p^{a+1}}\right)^{db}\times \left(\Z_{p^a}\right)^{d(m-b)}$.
\end{itemize}
\end{re}
From now on, we fix the local ring $R$ with parameters $I, \pi, d,m,n,a,b$ as above. We note that any nontrivial ideal of $R$ has the form $I^{t}$ for some $1\leq t\leq n-1$. Moreover $|I^t|=p^{d(n-t)}$ for any $1\leq t\leq n-1$. Next, we look at character groups of $R$ and $R\times R$.

\medskip

Let $\tau$ be an additive character of $R$ which is nontrivial on $I^{n-1}$. For each $a\in R$, define the map $\tau_a: R \rightarrow \mathbb{C}$ by $\tau_a(x)=\tau(ax)$ for all $x\in R$. It is straightforward to check that $\tau_a$ is an additive character of $R$.
Moreover for any $a,b\in R$, we have $\tau_a=\tau_b$ if and only if $\{(a-b)x:x\in R\}\subset \text{ker}(\tau)$.
As $\{(a-b)x: x\in R\}$ is an ideal of $R$ and $\tau$ is nontrivial on $I^{n-1}$, we have $\{(a-b)x: x\in R\}=\{0\}$, which implies $a=b$. Therefore, the group of characters of $R$ is $\{\tau_a: a\in R\}$.

\medskip

Let $\chi$ be any character of $R\times R$. As $\chi$ is a character on $R\times \{0\}$, there exists a unique $a\in R$ such that $\chi(x,0)=\tau_a(x)$ for any $x\in R$. Similarly, there exists a unique $b\in R$ such that $\chi(0,y)=\tau_b(y)$ for any $y\in R$.
For any $(x,y)\in R\times R$, we have
$$\chi(x,y)=\chi(x,0)\chi(0,y)=\tau_a(x)\tau_b(y)=\tau(ax+by).$$
Therefore, each character $\chi$ of $R\times R$ has the form $\tau_{a,b}$, $a,b\in R$, with $\tau_{a,b}$ being defined by $\tau_{a,b}(x,y)=\tau(ax+by)$ for any $(x,y)\in R\times R$.
As the character group of $R\times R$ has size $|R\times R|$, this group is $\{\tau_{a,b}:a,b\in R\}$.

\subsection{Construction 1}
Note that each element $x\in R\setminus\{0\}$ can be written uniquely as $x=\pi^k u$ for some positive integer $k\leq n-1$ and a unit $u$ in $R$. 
The following result \cite[Section 4]{leu1} partitions $R\times R$ in a way which becomes the main ingredient for our construction in this subsection.

\begin{re} \label{partition}
Let $\phi: R\rightarrow R$ be defined by $x=\pi^k u \mapsto \phi(x)=\pi^ku^{-1}$. 
Let $t\leq d$ be a positive integer and let $\{R_0,\dots, R_{p^t-1}\}$ be a partition of $R$ such that for any coset $a+I^{n-1}$ of $I^{n-1}$ in $R$ and any $i\in \{0,\dots, p^t-1\}$, we have $|R_i\cap (a+I^{n-1})|=p^{d-t}$. By $f_i$ we denote the characteristic function of $R_i$, that is, $f_i(x)=1$ if $x\in R_i$ and $f_i(x)=0$ otherwise.
For each $i=0,\dots,p^t-1$, define
\begin{equation} \label{compo}
D_i=\{(x,y)\in R\times R: f_i(\phi(x)y)=1\}.
\end{equation}
Then $\{D_0,\dots,D_{p^t-1}\}$ is a partition of $R\times R$ and for any character $\chi$ of $R\times R$, we have
\begin{equation} \label{charvalue}
\chi(D_i)=\begin{cases} p^{dn}f_i(0)+p^{dn-t}(p^{dn}-1) \ \text{if} \ \chi=\chi_0, \\ p^{dn}f_i(c_\chi)-p^{dn-t} \ \text{if} \ \chi\neq \chi_0,\end{cases}
\end{equation}
where the element $c_\chi \in R$ depends only on $\chi$.
\end{re}

\begin{thm} \label{local1}
Let $R$ be a finite local ring defined as in Result \ref{existence}.
Let $h$ be a positive integer and let $h=\prod_{i=1}^r p_i^{e_i}$ be the prime factorization of $h$.
Assume that 
\begin{equation} \label{nec1}
p^d\in p_1\mathbb{N}+\cdots+p_r\mathbb{N}.
\end{equation}
Then a $\BH(R\times R,h)$ matrix exists.
\end{thm}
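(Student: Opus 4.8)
The plan is to follow the general construction approach of Section 2.2: I will build a group ring element $D\in\mathbb{Z}[\zeta_h][R\times R]$ of the shape $D=\sum_{i=0}^{p^t-1}\eta_i D_i$, where the $D_i$ are the blocks produced by Result~\ref{partition} and the $\eta_i$ are carefully chosen $h$th roots of unity, and then check that $D$ satisfies conditions (i) and (ii) of Section 2.2, so that Result~\ref{gr} yields the desired $\BH(R\times R,h)$ matrix.

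First I would invoke Result~\ref{partition} with $t=d$; this is permissible since $d\geq 1$, and it provides a partition $\{D_0,\dots,D_{p^d-1}\}$ of $R\times R$ together with the character values (\ref{charvalue}). Because the $D_i$ partition $R\times R$, every element of $R\times R$ lies in exactly one $D_i$, so for \emph{any} choice of $h$th roots of unity $\eta_0,\dots,\eta_{p^d-1}$ the coefficients of $D=\sum_i\eta_i D_i$ are all $h$th roots of unity. Condition (i) is therefore free, and the entire difficulty is condition (ii), i.e. $DD^{(-1)}=|R\times R|=p^{2dn}$.

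To verify condition (ii) I would use Fourier inversion (Result~\ref{fourier}): since $\chi(D^{(-1)})=\overline{\chi(D)}$, it suffices to show $|\chi(D)|^2=p^{2dn}$ for every character $\chi$ of $R\times R$. Substituting (\ref{charvalue}) into $\chi(D)=\sum_i\eta_i\chi(D_i)$ and splitting into cases, one gets, for $\chi\neq\chi_0$, $\chi(D)=p^{dn}\eta_{i_\chi}-p^{dn-t}\sum_i\eta_i$ where $i_\chi$ is the unique index with $c_\chi\in R_{i_\chi}$, and for $\chi=\chi_0$, $\chi_0(D)=p^{dn}\eta_{i_0}+p^{dn-t}(p^{dn}-1)\sum_i\eta_i$ where $0\in R_{i_0}$. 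Hence, if the $\eta_i$ are chosen so that $\sum_{i=0}^{p^d-1}\eta_i=0$, both expressions collapse to $p^{dn}$ times a root of unity, giving $|\chi(D)|^2=p^{2dn}$ for all $\chi$, as needed. It remains to produce a vanishing sum of exactly $p^d$ many $h$th roots of unity, which is the easy converse direction of Result~\ref{Lam-Leung}: by hypothesis (\ref{nec1}) write $p^d=\sum_{j=1}^r c_j p_j$ with $c_j\in\mathbb{N}$, and take, for each $j$, $c_j$ disjoint blocks each equal to the full set of $p_j$th roots of unity; each block sums to $0$ and, since $p_j\mid h$, consists of $h$th roots of unity, and the blocks total $\sum_j c_j p_j=p^d$ roots.

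With these $\eta_i$ in hand the construction is complete. I do not expect a genuine obstacle beyond bookkeeping, since Result~\ref{partition} already encapsulates the hard combinatorics; the one point deserving emphasis is the conceptual observation that the two ``defect'' terms $p^{dn-t}(p^{dn}-1)\sum_i\eta_i$ (for $\chi_0$) and $-p^{dn-t}\sum_i\eta_i$ (for $\chi\neq\chi_0$) are annihilated \emph{simultaneously} by the single relation $\sum_i\eta_i=0$, which is precisely why the hypothesis (\ref{nec1}) is phrased as the existence of a vanishing sum of $p^d$ roots of unity.
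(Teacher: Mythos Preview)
Your proposal is correct and follows essentially the same route as the paper: build $D=\sum_i\eta_iD_i$ from the partition of Result~\ref{partition}, choose the $\eta_i$ to form a vanishing sum so that the ``defect'' terms in (\ref{charvalue}) drop out, and conclude $|\chi(D)|=p^{dn}$ for every character via Fourier inversion. The only cosmetic differences are that the paper carries a general parameter $1\le t\le d$ (you fix $t=d$, which is all the theorem needs), and that you spell out the easy constructive converse of Lam--Leung explicitly, whereas the paper simply asserts the existence of the vanishing sum.
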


\begin{proof}
By (\ref{nec1}) and Result \ref{Lam-Leung}, there exists a vanishing sum of roots of unity of length $p^d$. In fact, these $p^d$ roots of unity can be used directly in our construction for $\BH(R\times R,h)$ matrices. However, we will prove this theorem under a more flexible condition as follows. Assume that $1\leq t\leq d$ is any integer such that $p^t\in p_1\mathbb{N}+\cdots+ p_r\mathbb{N}$. By Result \ref{Lam-Leung}, there exists $p^t$ $h$th roots of unity $\eta_0, \dots,\eta_{p^t-1}$ such that 
\begin{equation} \label{nec local ring}
\sum_{i=0}^{p^t-1}\eta_i=0.
\end{equation}
We show that these $\eta_i$ can be used to construct a $\BH(R\times R,h)$ matrix. Let $D_i$, $0\leq i\leq p^t-1$, be subsets of $R\times R$ defined as in (\ref{compo}). Define
$$D=\sum_{i=0}^{p^t-1}\eta_iD_i \in \mathbb{Z}[\zeta_h][R\times R].$$
By Result \ref{partition}, the set $\{D_0,\dots,D_{p^t-1}\}$ is a partition of $R\times R$. Let $g\in R\times R$ and assume $g\in D_i$ for some $i\in \{0,\dots,p^t-1\}$.
As $g$ does not belong to any $D_j$ with $j\neq i$, the coefficient of $g$ in $D$ is $\eta_i$, an $h$th root of unity.
Thus all coefficients in $D$ are complex $h$th roots of unity.
To show that $D$ gives a $\BH(R\times R,h)$ matrix, it remains, by Result \ref{gr}, to show that $DD^{(-1)}=|R\times R|$.
\medskip

Let $\chi_0$ denote the trivial character of $R\times R$. By (\ref{charvalue}), we have 
$$\chi_0(D)=p^{dn}\left(\sum_{i=0}^{p^t-1}\eta_i f_i(0)\right)+p^{dn-t}(p^{dn}-1)\left(\sum_{i=0}^{p^t-1}\eta_i\right)=p^{dn}\left(\sum_{i=0}^{p^t-1}\eta_i f_i(0)\right),$$ 
where the last equality follows from (\ref{nec local ring}). Note that $\{R_0,\dots,R_{p^t-1}\}$ is a partition of $R$ and $f_i$ is the characteristic function of $R_i$. So there exists $j\in \{0,\dots,p^t-1\}$ such that $f_j(0)=1$ and $f_i(0)=0$ for any $i\neq j$.
We obtain $\chi_0(D)=p^{dn}\eta_j$, which implies $\chi_0(DD^{(-1)})=p^{2dn}$. 

\medskip

Let $\chi$ be any nontrivial character of $R\times R$. By (\ref{charvalue}), we have
$$\chi(D)=p^{dn}\left(\sum_{i=0}^{p^t-1}\eta_if_i(c_\chi)\right)-p^{dn-t}\left(\sum_{i=0}^{p^t-1}\eta_i\right)=p^{dn}\left(\sum_{i=0}^{p^t-1}\eta_if_i(c_\chi)\right).$$ 
Similar to the previous case, there exists $j\in \{0,\dots,p^t-1\}$ such that $f_j(c_\chi)=1$ and $f_i(c_\chi)=0$ for any $i\neq j$. We obtain $\chi(D)=p^{dn}\eta_j$, which implies $\chi(DD^{(-1)})=p^{2dn}$. 

\medskip

Therefore, we have $\chi(DD^{(-1)})=p^{2dn}=|R\times R|$ for any character $\chi$ of $R\times R$. As $R\times R$ is an abelian group, we obtain, by Result \ref{fourier},
$$DD^{(-1)}=|R\times R|.$$

\end{proof}

\medskip

\begin{rmk} \label{exist1}
The construction in Theorem \ref{local1} depends on the relation $p^d\in p_1\mathbb{N}+\cdots+p_r\mathbb{N}$, where $p_1<\cdots<p_r$ are all prime divisors of $h$. In general, there are many cases where this can happen. For example, if $p^d\geq (p_1-1)(p_2-1)$, then we can write $p^d=p_1a_1+p_2a_2$ for some $a_1,a_2\in \mathbb{N}$, see \cite[Ex. 4, p.22]{lev}, which implies $p^d\in p_1\mathbb{N}+\cdots+p_r\mathbb{N}$. Specifically, consider the case $h\equiv 0\pmod{6}$. We have $p_1=2$ and $p_2=3$ and $p^d\geq (p_1-1)(p_2-1)=2$ for any prime $p$ and positive integer $d$. Hence $p^d\in p_1\mathbb{N}+\cdots+p_r\mathbb{N}$ and a $\BH(R\times R,h)$ matrix exists.
\end{rmk}

\bigskip

\subsection{Construction 2}
For each $r\in R$ and $s\in I$, define 
$$I_r=\{(x,xr): x\in R\}, \ J_s=\{(xs,x): x\in R\}.$$
Note that both $I_r$ and $J_s$ are subgroups of $R\times R$ and each has order $|R|=p^{dn}$.
Recall that by Result \ref{gr}, the existence of a $\BH(R\times R,h)$ matrix is equivalent to the existence of a group-ring element $D\in \Z[\zeta_h][R\times R]$ which has coefficients as $h$th roots of unity and satisfies $DD^{(-1)}=|R\times R|$.
In our coming construction, we put $D=\sum_{r\in R}\eta_rI_r+\sum_{s\in I}\mu_sJ_s$, where $\eta_r$ and $\mu_s$ are $h$th roots of unity and satisfy certain conditions. 

\medskip

Recall that $\pi$ is the generator of the maximal ideal $I$ of $R$. For any $x\in R$, we can write it uniquely in the form $x=\pi^tu_x$, where $u_x$ is a unit in $R$ and $t\in \{0,\dots,n\}$. Define $\nu_\pi: R\rightarrow \mathbb{N}$ by $x=\pi^tu_x\mapsto \nu_\pi(x)=t$.
\medskip

First, observe the following properties of the subgroups $I_r$ and $J_s$ of $R\times R$.

\begin{lem}\label{intersect}
The sets $I_r$ and $J_s$ have the following properties.
\begin{itemize}
\item[(a)] For any $(x,y)\in R\times R\setminus\{(0,0)\}$, we have
$$(x,y)\in \begin{cases} \cup_{r\in R}I_r \ \text{if} \ \nu_\pi(x)\leq \nu_\pi(y),\\ \cup_{s\in I}J_s \ \text{if} \ \nu_\pi(x)>\nu_\pi(y). \end{cases}$$
\item[(b)] $I_r\cap J_s=\{(0,0)\}$ for any $r\in R$ and $s\in I$.
\item[(c)] Let $(x,xr)$ be an element in $I_r$ and put $t=\nu_\pi(x)$. Then
$$\{I_{r'}: (x,xr)\in I_{r'}\}=\{I_{r'}: r'\in r+I^{n-t}\}.$$
Let $(xs,x)$ be an element in $J_s$ and put $t=\nu_\pi(x)$. Then
$$\{J_{s'}: (xs,x)\in J_{s'}\}=\{J_{s'}: s'\in s+I^{n-t}\}.$$
\end{itemize}
\end{lem}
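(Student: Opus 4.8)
The plan is to reduce all three parts to two elementary facts about $R$. First, for any $x = \pi^t u_x$ with $u_x$ a unit, the annihilator $\operatorname{Ann}(x) = \{z \in R : xz = 0\}$ equals $I^{n-t}$: the inclusion $I^{n-t} \subseteq \operatorname{Ann}(x)$ is immediate from $\pi^n = 0$, and conversely if $xz = 0$ and $z = \pi^c w_z$ with $w_z$ a unit, then $\pi^{t+c} = 0$, and since $\pi^j \ne 0$ for every $j \le n-1$ (as $\pi^{n-1} \ne 0$) we get $t + c \ge n$, i.e. $z \in I^{n-t}$; in particular $\operatorname{Ann}(x) = I^{n-\nu_\pi(x)}$, with the convention $\nu_\pi(0) = n$. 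Second, for any $w \in I$ the element $1 - w$ is a unit, since $I$ is the unique maximal ideal. Everything else is bookkeeping.

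For part (c): an element $(x,xr) \in I_r$ lies in $I_{r'}$ iff there is $z \in R$ with $(z, zr') = (x, xr)$; comparing first coordinates forces $z = x$, so the condition is $x(r' - r) = 0$, i.e. $r' - r \in \operatorname{Ann}(x) = I^{n-t}$ where $t = \nu_\pi(x)$. This is exactly the asserted equality $\{I_{r'} : (x,xr)\in I_{r'}\} = \{I_{r'} : r' \in r + I^{n-t}\}$ (and the $I_{r'}$ are pairwise distinct, since $(1,r') \in I_{r'}$ pins down $r'$). The statement for $J_s$ follows by swapping the two coordinates; here I would remark that when $(xs,x) \ne (0,0)$ we have $t \le n-1$, so $I^{n-t} \subseteq I$ and hence $s + I^{n-t} \subseteq I$ because $s \in I$, which guarantees every $J_{s'}$ in the claimed index set is defined.

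For part (b): if $(x,y) \in I_r \cap J_s$, then $y = xr$ and $x = ys$, whence $x = xrs$, i.e. $x(1 - rs) = 0$; since $s \in I$ gives $rs \in I$, the element $1 - rs$ is a unit and so $x = 0$, forcing $y = xr = 0$. For part (a), given $(x,y) \ne (0,0)$ with $\nu_\pi(x) \le \nu_\pi(y)$ I want $r$ with $y = xr$: if $y = 0$ take $r = 0$; otherwise $x \ne 0$ (else $\nu_\pi(y) \ge n$ would force $y = 0$), so writing $x = \pi^a u$, $y = \pi^b v$ with $u,v$ units and $a = \nu_\pi(x) \le b = \nu_\pi(y)$, the choice $r = u^{-1}\pi^{b-a}v$ gives $xr = \pi^b v = y$, so $(x,y) \in I_r$. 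Symmetrically, if $\nu_\pi(x) > \nu_\pi(y)$ then $y \ne 0$; writing $y = \pi^b v$ with $v$ a unit, $b = \nu_\pi(y)$, and $x = \pi^a x'$ (with $x'$ a unit, $a = \nu_\pi(x) > b$, or $x = 0$), the element $s = v^{-1}\pi^{a-b}x'$ (resp.\ $s = 0$) satisfies $a - b \ge 1$, so $s \in I$, and $ys = x$, giving $(x,y) \in J_s$.

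All the computations here are one line each, so I do not expect a real obstacle; the only point demanding care is the treatment of the degenerate cases $x = 0$ or $y = 0$ together with the convention $\nu_\pi(0) = n$, and, in part (c), confirming that the subgroups $J_{s'}$ appearing on the right-hand side are genuinely indexed by elements of $I$.
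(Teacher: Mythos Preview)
Your proof is correct and follows essentially the same line as the paper: parts (a) and (c) are identical in spirit (you just make explicit the annihilator $\operatorname{Ann}(x)=I^{n-\nu_\pi(x)}$ that the paper uses implicitly), and you are more careful than the paper about degenerate cases and about checking $s+I^{n-t}\subseteq I$ in part~(c). The only real divergence is part~(b): the paper argues by contradiction via valuations (membership in $J_s$ forces $\nu_\pi(x)>\nu_\pi(xr)$), whereas you compute $x(1-rs)=0$ and invoke that $1-rs$ is a unit; both are one-line arguments, and yours has the mild advantage of being self-contained rather than leaning on~(a).
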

\begin{proof}
(a) If $\nu_\pi(x)\leq \nu_\pi(y)$, there exists $r\in R$ such that $y=xr$. Hence $(x,y)=(x,xr)\in I_r$. On the other hand, if $\nu_\pi(x)>\nu_\pi(y)$, there exists $s\in I$ such that $x=ys$. Hence $(x,y)=(ys,y)\in J_s$. 

\medskip

\noindent (b) Assume that $(x,xr)\in I_r\setminus\{(0,0)\}$ is an element in $J_s$ for some $s\in I$. By part (a), we have $\nu_\pi(x)>\nu_\pi(xr)$, a contradiction. Thus $I_r\cap J_s=\{(0,0)\}$. 

\medskip

\noindent (c) Assume that $(x,xr) \in I_r$ is an element in $I_{r'}$. We obtain $x(r'-r)=0$, which implies $r'\in r+I^{n-t}$, where $t=\nu_\pi(x)$. Hence $\{I_{r'}: (x,xr) \in I_{r'}\}=\{I_{r'}: r'\in r+I^{n-t}\}$. The remaining claim is proved in the same way.
\end{proof}

\medskip

Let $R_1\subset R$ be a fixed set of coset representatives of $I$ in $R$ such that $R_1$ contains $0$. Define $R_0=\{0\}$. For $1\leq i \leq n$, define
$$R_i=R_1+\pi R_1+\dots+\pi^{i-1}R_1.$$
Hence $R_i$ is a set of coset representatives of $I^i$ in $R$. Moreover, the set $\pi R_j$, $0\leq j\leq n-1$, is a set of coset representatives of $I^{j+1}$ in $I$. Note that $\{0\}=R_0\subset R_1\subset R_2\subset \cdots\subset R_{n-1}\subset R_n=R$ and $|R_i|=p^{di}$ for any $0\leq i\leq n$. 
We are ready for the main result of this subsection.

\medskip

\begin{thm} \label{local2}
Let $\eta$, $\eta_r$, $\delta_u$, $\mu_s$, $\gamma_v$ with $r\in R$, $u\in R_{n-1}$, $s\in I$, $v\in \pi R_{n-2}$, be complex $h$th roots of unity such that for any $1\leq i\leq n-1$ and $0\leq j\leq n-2$, we have
\begin{eqnarray}
\sum_{r\in u+I^i}\eta_r &=& \delta_u \ \text{for any} \ u\in R_i , \label{first} \\
\sum_{s\in v+I^{j+1}}\mu_s &=& \gamma_v \ \text{for any} \ v\in \pi R_j, \label{second}\\\
\sum_{u\in R_1}\delta_u+\sum_{v\in \pi R_1}\gamma_v &=& \eta. \label{third}
\end{eqnarray}
Put
$$D=\sum_{r\in R}\eta_rI_r+\sum_{s\in I}\mu_sJ_s.$$
Then $D$ has coefficients as complex $h$th roots of unity and 
$$DD^{(-1)}=|R\times R|.$$
As a consequence, there exists a $\BH(R\times R,h)$ matrix.
\end{thm}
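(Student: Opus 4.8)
The plan is to establish the two displayed properties of $D$ --- that all of its coefficients are complex $h$th roots of unity, and that $DD^{(-1)}=|R\times R|$ --- from which the existence of a $\BH(R\times R,h)$ matrix follows at once by Result~\ref{gr}.

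\textbf{The coefficients.} Fix $(x,y)\in R\times R$. Since each $I_r$ and each $J_s$ is a subgroup of $R\times R$, the element $(0,0)$ lies in all of them, so its coefficient in $D$ is $\sum_{r\in R}\eta_r+\sum_{s\in I}\mu_s$; partitioning $R$ into the cosets of $I$ (represented by $R_1$) and $I$ into the cosets of $I^2$ (represented by $\pi R_1$), relations (\ref{first}) and (\ref{second}) rewrite this as $\sum_{u\in R_1}\delta_u+\sum_{v\in\pi R_1}\gamma_v$, which is $\eta$ by (\ref{third}). If $(x,y)\neq(0,0)$, then by Lemma~\ref{intersect}(a),(b) exactly one of the following holds: $\nu_\pi(x)\le\nu_\pi(y)$, whence $(x,y)$ belongs to some $I_r$ and to no $J_s$; or $\nu_\pi(x)>\nu_\pi(y)$, whence $(x,y)$ belongs to some $J_s$ and to no $I_r$. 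In the first case, writing $(x,y)=(x,xr)$ and $t=\nu_\pi(x)$ (which is at most $n-1$), Lemma~\ref{intersect}(c) shows that the coefficient of $(x,y)$ in $D$ equals $\sum_{r'\in r+I^{n-t}}\eta_{r'}$: this is $\eta_r$ if $t=0$, and if $t\ge 1$ it equals $\delta_u$ for the unique $u\in R_{n-t}$ with $u+I^{n-t}=r+I^{n-t}$, by (\ref{first}) with $i=n-t$. The second case is symmetric: the coefficient is a single $\mu_s$, or collapses via (\ref{second}) to a single $\gamma_v$ with $v$ in the representative system $\pi R_{n-\nu_\pi(y)-1}$. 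In every case the coefficient is an $h$th root of unity.

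\textbf{The ring equation.} As $R\times R$ is abelian, Result~\ref{fourier} reduces $DD^{(-1)}=|R\times R|$ to showing $|\chi(D)|=p^{dn}$ for every character $\chi=\tau_{a,b}$. One computes $\chi(I_r)=\sum_{x\in R}\tau((a+br)x)$, which equals $p^{dn}$ if $a+br=0$ and $0$ otherwise, and likewise $\chi(J_s)=p^{dn}$ if $as+b=0$ and $0$ otherwise; hence $\chi(D)=p^{dn}S_\chi$ where $S_\chi=\sum_{r\in R,\ br=-a}\eta_r+\sum_{s\in I,\ as=-b}\mu_s$, and it remains to prove $|S_\chi|=1$. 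For $\chi=\chi_0$ this is exactly the $(0,0)$-coefficient computed above, so $S_{\chi_0}=\eta$. For $\chi\neq\chi_0$ the crucial ingredient is a dichotomy between the equations $br=-a$ and $as=-b$, derived from $\pi$-adic valuations and the identity $\operatorname{Ann}(\pi^t)=I^{n-t}$: if $\nu_\pi(a)\ge\nu_\pi(b)$ then $br=-a$ is solvable with solution set a single coset $r_0+I^{n-\nu_\pi(b)}$, while $as=-b$ has no solution lying in $I$; and if $\nu_\pi(a)<\nu_\pi(b)$ the roles are reversed, $as=-b$ having solution set within $I$ a single coset $s_0+I^{n-\nu_\pi(a)}$, and $br=-a$ being unsolvable. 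In either case $S_\chi$ is a sum of the relevant $\eta_r$ (respectively $\mu_s$) over one coset, which collapses by (\ref{first}) (respectively (\ref{second})) to a single $\delta_u$ or $\gamma_v$, or to a single $\eta_r$ or $\mu_s$ when that coset is a point; so $|S_\chi|=1$, as needed.

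The main obstacle will be the case analysis behind this dichotomy. For each $(a,b)$ one must pin down which of $br=-a$ and $as=-b$ (the latter restricted to $s\in I$) is solvable, verify that the solution set is precisely a coset of the appropriate power of $I$ so that hypotheses (\ref{first})--(\ref{third}) apply to it, and keep careful track of the boundary indices --- the split $t=0$ versus $t\ge 1$, and the ranges of the exponents $n-t$ and $j$ (with $j+1$ the relevant power of $I$) --- so that $R_{n-t}$ and $\pi R_j$ are genuinely among the coset-representative systems fixed before the theorem. Once the dichotomy and this coset description are in place, the rest is routine manipulation of characters through Fourier inversion and of the collapsing identities (\ref{first})--(\ref{third}).
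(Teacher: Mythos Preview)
Your proposal is correct and follows essentially the same approach as the paper: the coefficient computation via Lemma~\ref{intersect} with the three-way split $(0,0)$ / $\nu_\pi(x)\le\nu_\pi(y)$ / $\nu_\pi(x)>\nu_\pi(y)$, and the character computation via Fourier inversion together with the $\nu_\pi(a)$-versus-$\nu_\pi(b)$ dichotomy determining which of $a+br=0$, $as+b=0$ is solvable and identifying the solution set as a single coset of $I^{n-t}$, are exactly the paper's argument. Your discussion of the boundary cases ($t=0$ versus $t\ge1$, and the index ranges for the representative systems $R_{n-t}$, $\pi R_{n-t-1}$) is also on target.
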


\begin{proof}
First, we prove that $D$ has all coefficients as complex $h$th roots of unity. Let $(x,y)$ be any nonzero element in $R\times R$. Comparing the values of $\nu_\pi(x)$ and $\nu_\pi(y)$, we consider the following two cases.

\medskip

\noindent \textbf{Case 1.} $\nu_\pi(x)\leq \nu_\pi(y)$. \\
 Write $t=\nu_\pi(x)$ and $y=xr$ for some $r\in R$. Note that $0\leq t\leq n-1$, as $(x,y)\neq (0,0)$. By (a) and (b) of Lemma \ref{intersect}, we have $(x,y)\in \left(\cup_{r\in R}I_r\right)\setminus \left(\cup_{s\in I}J_s\right)$. By Lemma \ref{intersect}.(c), we have $(x,y)\in I_s$ if and only if $s\in r+I^{n-t}$. If $t=0$, then $I_r$ is the only set which contains $(x,y)$ and the coefficient of $(x,y)$ in $D$ is $\eta_r$. Assume that $1\leq t\leq n-1$. By equation (\ref{first}), the coefficient of $(x,y)$ in $D$ is
 $$\sum_{s\in r+I^{n-t}}\eta_s=\delta_{u},$$
 where $u\in R_{n-t}$ such that $r-u\in I^{n-t}$.
 
 \medskip
 
 \noindent \textbf{Case 2.} $\nu_\pi(x)>\nu_\pi(y)$.\\
Write $t=\nu_\pi(y)$ and $x=ry$ with some $r\in I$. Note that $0\leq t\leq n-1$. By (a) and (b) of Lemma \ref{intersect}, we have $(x,y)\in \left(\cup_{s\in I}J_s\right)\setminus \left(\cup_{r\in R}I_r\right)$.
By Lemma \ref{intersect}.(c), we have $(x,y)\in J_s$ if and only if $s\in r+I^{n-t}$. If $t=0$, then $J_r$ is the unique set which contains $(x,y)$ and the coefficient of $(x,y)$ in $D$ is $\mu_r$. Assume that $1\leq t\leq n-1$. By equation (\ref{second}), the coefficient of $(x,y)$ in $D$ is
$$\sum_{s\in r+I^{n-t}}\mu_s=\gamma_{v},$$
where $v\in \pi R_{n-1-t}$ such that $r-v\in I^{n-t}$.
 \medskip
 
 We have shown that any nonzero element $(x,y)$ has coefficient in $D$ as an $h$th root of unity. Now we consider the coefficient of $(0,0)$ in $D$.

\noindent \textbf{Case 3.} $(x,y)=(0,0)$.\\
As $(0,0)\in I_r$ and $(0,0)\in J_s$ for any $r\in R$, $s\in I$, its coefficient in $D$ is $\sum_{r\in R}\eta_r+ \sum_{s\in I}\mu_s$. By (\ref{first}), we have $\sum_{r\in R}\eta_r=\sum_{u\in R_1}\sum_{r\in u+I}\eta_r=\sum_{u\in R_1}\delta_u$. By (\ref{second}), we have $\sum_{s\in I}\mu_s=\sum_{v\in \pi R_1}\sum_{s\in v+I^2}\mu_s=\sum_{v\in \pi R_1}\gamma_v$. By (\ref{third}), we obtain
 $$\sum_{r\in R}\eta_r+ \sum_{s\in I}\mu_s=\sum_{u\in R_1}\delta_u+\sum_{v\in \pi R_1}\gamma_v=\eta,$$
 proving that the coefficient of $(0,0)$ in $D$ is an $h$th root of unity. 
 
 \medskip
  \medskip
 
It remains to show that $DD^{(-1)}=|R\times R|$, which is equivalent to showing $|\chi(D)|^2=p^{2dn}$ for any character $\chi$ of $R\times R$. Recall that the group of characters of $R\times R$ is $\{\tau_{a,b}: a,b\in R\}$. Each character $\tau_{a,b}$ is defined by $\tau_{a,b}(x,y)=\tau(ax+by)$, where $\tau$ is a fixed character of $R$ which is nontrivial on $I^{n-1}$. The proof of $|\tau_{a,b}(D)|^2=p^{2dn}$ for any $a,b\in R$ follows from the following claims.
 
\medskip

\noindent \textbf{Claim 1.} $|\tau_{0,0}(D)|^2=p^{2dn}$. 

\medskip

\noindent We have $\tau_{0,0}(D)=p^{dn}\left( \sum_{r\in R}\eta_r+\sum_{s\in I}\mu_s \right)=p^{dn}\eta$.
Hence $$|\tau_{0,0}(D)|^2=p^{2dn}.$$

\medskip

From now one, we fix $(a,b)\in R\times R\setminus\{(0,0)\}$. 

\medskip

\noindent \textbf{Claim 2.} 
There exists $r\in R$ or $s\in S$ such that $\tau_{a,b}(I_r)=p^{dn}$ or $\tau_{a,b}(J_s)=p^{dn}$, respectively. Moreover, the two equations $\tau_{a,b}(I_r)=p^{dn}$ and $\tau_{a,b}(J_s)=p^{ds}$ cannot happen simultaneously. 

\medskip

\noindent If $\tau_{a,b}(I_r)=p^{dn}$ for some $r\in R$, then $\tau((a+br)x)=1$ for all $x\in R$, which implies 
\begin{equation} \label{cond1}
a+br=0.
\end{equation}
If $\tau_{a,b}(J_s)=p^{dn}$ for some $s\in I$, then $\tau((as+b)x=1)$ for all $x\in R$, which implies
\begin{equation} \label{cond2}
as+b=0.
\end{equation}
The equation (\ref{cond1}) has a solution $r\in R$ if and only if $\nu_\pi(a)\geq \nu_\pi(b)$. The equation (\ref{cond2}) has a solution $s\in I$ if and only if $\nu_\pi(a)<\nu_\pi(b)$. Thus in any case, only one of the two equations (\ref{cond1}) or (\ref{cond2}) has solution.
\medskip

\noindent \textbf{Claim 3.} $|\tau_{a,b}(D)|^2=p^{2dn}$.

\medskip

\noindent First, we assume that $\nu_\pi(a)\geq \nu_\pi(b)$. Write $t=\nu_\pi(b)$, $0\leq t\leq n-1$. By the proof of Claim 2, there exists an element $r\in R$ such that $\tau_{a,b}(I_r)=p^{dn}$. We also have
\begin{equation} \label{trivial}
\{I_s: \tau_{a,b}(I_s)=p^{dn}\}=\{I_s: a+bs=0\}=\{I_s: s\in r+I^{n-t}\}.
\end{equation}

\noindent For any $s\not\in r+I^{n-t}$, we have $a+bs\neq 0$ and
\begin{equation} \label{nontri1}
\tau_{a,b}\left(I_{s}\right)=\sum_{x\in R}\tau\left((a+bs)x\right)=\sum_{x\in R}\tau_{a+bs}(x)=0,
\end{equation}
where in the last equality, we use the property that $\tau_{a+bs}$ is a nontrivial character of $R$. Similarly, for any $s\in I$, we have $as+b\neq 0$ and
\begin{equation} \label{nontri2}
\tau_{a,b}(J_s)=\sum_{x\in R}\tau\left((as+b)x\right)=\sum_{x\in R}\tau_{as+b}(x)=0.
\end{equation}
Combining (\ref{trivial}), (\ref{nontri1}) and (\ref{nontri2}), we obtain
$\tau_{a,b}(D)=p^{dn}\left(\sum_{s\in r+I^{n-t}}\eta_{s}\right)$. If $t=0$, then $\tau_{a,b}(D)=p^{dn}\eta_r.$ If $1\leq t\leq n-1$, then $\tau_{a,b}(D)=p^{dn}\delta_{u},$
where $u\in R_{n-t}$ such that $r-u\in I^{n-t}$. In any case, we obtain $|\tau_{a,b}(D)|^2=p^{2dn}.$
 
\medskip

Lastly, the case $\nu_\pi(a)<\nu_\pi(b)$ follows similarly from the last case as follows. Put $t=\nu_\pi(a)$, $0\leq t\leq n-1$. Fix $s\in I$ such that $as+b=0$, that is, $\tau_{a,b}(J_s)=p^{dn}$. We have
$$\{J_{s'}: \tau_{a,b}(J_{s'})=p^{dn}\}=\{J_{s'}: s'\in s+I^{n-t}\}.$$ 
For any $s'\in I \setminus (s+I^{n-t})$, we have $\tau_{a,b}(J_{s'})=0$, and for any $r\in R$, we have $\tau_{a,b}(I_r)=0$. We obtain
$$\tau_{a,b}(D)=p^{dn} \left(\sum_{s'\in s+I^{n-t}}\mu_{s'}\right).$$
If $t=0$, then $\tau_{a,b}(D)=p^{dn}\mu_s$. If $1\leq t\leq n-1$, then $\tau_{a,b}(D)=p^{dn}\gamma_v$, where $v\in \pi R_{n-1-t}$ such that $v-s\in I^{n-t}$. In any case, we obtain $|\tau_{a,b}(D)|^2=p^{2dn}$.

\medskip

We finish the proof of Claim 3 and finish the proof of Theorem \ref{local2}.
\end{proof}

\begin{rmk} \label{exist2}
The construction of $\BH(R\times R,h)$ matrices in Theorem \ref{local2} depends on the existence of complex $h$th roots of unity $\eta_r$, $\delta_u$, $\mu_s$, $\gamma_v$ which satisfy equations (\ref{first}), (\ref{second}) and (\ref{third}). In general, there can be various choices for these roots of unity. 

\medskip

For example, assume that $h\equiv 0 \pmod{6}$. Note that (\ref{first}), (\ref{second}) and (\ref{third}) are equations of vanishing sums of $h$th roots of unity of various lengths.
By Result \ref{Lam-Leung}, a vanishing sum of $h$th roots of unity of length $n$ exists if and only if $n \in p_1\mathbb{N}+\cdots+p_r\mathbb{N}$, where $p_i$'s are all prime divisors of $h$.
As any integer $n\geq 2$ can be written in the form $2a+3b$ for some $a,b\in \mathbb{N}$, we have $n \in p_1\mathbb{N}+\cdots+p_r\mathbb{N}$, which implies the existence of a vanishing sum of $h$th roots of unity of length $n$.
 Hence there exist complex $h$th roots of unity $\eta_r$, $\delta_u$, $\mu_s$, $\gamma_v$ which satisfy (\ref{first}), (\ref{second}) and (\ref{third}).
\end{rmk}

\bigskip

\subsection{A new family of perfect arrays}
A multi-dimensional array $A=(a_{i_1,\dots,i_k})$ of size $n_1\times\dots\times n_k$ is called a \textbf{perfect \boldmath$h$-phase array} if all its entries are complex $h$th roots of unity and
$$\sum_{0\leq i_j\leq n_j-1 \ \forall \ j} a_{i_1,\dots,i_k}\overline{a}_{i_1+s_1,\dots,i_k+s_k}=0$$
 whenever $(s_1,\dots,s_k) \neq (0,\dots,0)$, where the indices are taken modulo $n_j$ for $1\leq j \leq k$. Perfect arrays have a wide range of applications in communication and radar systems, see \cite{bomer}, \cite{fan}, \cite{golomb} for example.

\medskip

In \cite[Lemma 3.4]{duc2}, the author shows the following equivalence between perfect arrays and group-invariant Butson Hadamard matrices.

\begin{re} \label{equiv}
Let $k,h, n_1,\ldots ,n_k$ be positive integers. Then a perfect $h$-phase array of size $n_1\times \cdots \times n_k$ exists if and only if a $\BH(\mathbb{Z}_{n_1}\times \cdots \times \mathbb{Z}_{n_k},h)$ matrix exists.
\end{re}
Using Result \ref{equiv} and the construction of group-invariant Butson Hadamard matrices in \cite{duc1}, the author \cite{duc2} obtained the following array. 

\begin{re}
Suppose that $k,h,n_1,\ldots,n_k$ are positive integers such that
\begin{equation*}
(h,n_i)^2\equiv 0\pmod{n_i} \ \text{and} \ (\nu_2(n_i),\nu_2(h)) \neq (1,1)  \ \text{for any} \ 1\leq i \leq k. 
\end{equation*}
Then a perfect $h$-phase array of size $n_1\times \cdots \times n_k$ exists.
\end{re}

\medskip

In Theorem \ref{local1} and Theorem \ref{local2}, we constructed $\BH(R\times R,h)$ matrices in which $R$ is a finite local ring with additive group structure $R\cong \left(\Z_{p^{a+1}}\right)^{db}\times \left(\Z_{p^a}\right)^{d(m-b)}$. Hence the corresponding perfect arrays can be constructed directly using Result \ref{equiv}.

\medskip

The construction of $\BH(R\times R,h)$ matrices in Theorem \ref{local1} and Theorem \ref{local2} depends on either following conditions.

\begin{itemize}
\item[(i)] $p^d\in p_1\mathbb{N}+\cdots+p_r\mathbb{N}$, where $p_1,\dots,p_r$ are all prime divisors of $h$.
\item[(ii)] There exists complex $h$ roots of unity $\eta$, $\eta_r$, $\delta_u$, $\mu_s$, $\mu_v$ with $r\in R$, $u\in R_{n-1}$, $s\in I$, $v\in \pi R_{n-2}$, such that for any $1\leq i\leq n-1$ and $0\leq j\leq n-2$, we have
\begin{eqnarray*}
\sum_{r\in u+I^i}\eta_r &=& \delta_u \ \text{for any} \ u\in R_i ,  \\
\sum_{s\in v+I^{j+1}}\mu_s &=& \gamma_v \ \text{for any} \ v\in \pi R_j, \\
\sum_{u\in R_1}\delta_u+\sum_{v\in \pi R_1}\gamma_v &=& \eta. 
\end{eqnarray*}
\end{itemize}

\medskip

\begin{thm}  \label{newperfectarray}
Under the same notations as before, assume that either condition (i) or (ii) is satisfied. Then there exists a perfect $h$-phase array of size $p^{a+1}\times\cdots\times p^{a+1}\times p^a\times \cdots \times p^a$, where there are $2db$ terms $p^{a+1}$ and $2d(m-b)$ terms $p^a$.
\end{thm}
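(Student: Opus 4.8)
The plan is to derive Theorem \ref{newperfectarray} as an immediate corollary of Theorems \ref{local1} and \ref{local2} together with the equivalence in Result \ref{equiv}. First I would recall from Result \ref{existence}.(iv) that the finite local ring $R$ fixed throughout the section has additive group $(R,+)\cong \left(\Z_{p^{a+1}}\right)^{db}\times \left(\Z_{p^a}\right)^{d(m-b)}$, and hence, as an abelian group,
$$R\times R \cong \left(\Z_{p^{a+1}}\right)^{2db}\times \left(\Z_{p^a}\right)^{2d(m-b)}.$$
A $\BH(R\times R,h)$ matrix is by definition a $G$-invariant matrix where $G=(R\times R,+)$, so it is precisely a $\BH\!\left(\left(\Z_{p^{a+1}}\right)^{2db}\times \left(\Z_{p^a}\right)^{2d(m-b)},h\right)$ matrix, writing the group as the explicit direct product $\Z_{n_1}\times\cdots\times\Z_{n_k}$ with $k=2db+2d(m-b)=2dm$, and $n_i=p^{a+1}$ for $2db$ values of $i$ and $n_i=p^a$ for the remaining $2d(m-b)$ values.

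Next I would invoke the construction results. If condition (i) holds, namely $p^d\in p_1\mathbb{N}+\cdots+p_r\mathbb{N}$ where $p_1,\dots,p_r$ are the prime divisors of $h$, then Theorem \ref{local1} produces a $\BH(R\times R,h)$ matrix. If instead condition (ii) holds, i.e. there exist complex $h$th roots of unity $\eta$, $\eta_r$, $\delta_u$, $\mu_s$, $\gamma_v$ satisfying the vanishing-sum relations (\ref{first}), (\ref{second}), (\ref{third}), then Theorem \ref{local2} produces a $\BH(R\times R,h)$ matrix. In either case we obtain a $\BH(R\times R,h)$ matrix, hence by the identification above a $\BH\!\left(\Z_{n_1}\times\cdots\times\Z_{n_k},h\right)$ matrix for the list of $n_i$ just described.

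Finally I would apply Result \ref{equiv}: the existence of a $\BH\!\left(\Z_{n_1}\times\cdots\times\Z_{n_k},h\right)$ matrix is equivalent to the existence of a perfect $h$-phase array of size $n_1\times\cdots\times n_k$. Plugging in the values of the $n_i$ yields a perfect $h$-phase array of size $p^{a+1}\times\cdots\times p^{a+1}\times p^a\times\cdots\times p^a$ with $2db$ terms equal to $p^{a+1}$ and $2d(m-b)$ terms equal to $p^a$, which is exactly the claimed array. There is no real obstacle here; the only point requiring a word of care is the purely bookkeeping observation that the additive group of $R\times R$ splits as the stated direct product of cyclic groups with the correct multiplicities (which is just doubling the exponents in Result \ref{existence}.(iv)), and that group-invariance for the additive group $(R\times R,+)$ coincides with group-invariance for this explicit product of cyclic groups. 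Everything else is a direct citation of earlier results in the paper.
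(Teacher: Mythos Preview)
Your proposal is correct and follows exactly the approach the paper takes. The paper does not write out a separate proof for Theorem \ref{newperfectarray}; in the paragraphs immediately preceding it, it observes that the $\BH(R\times R,h)$ matrices from Theorems \ref{local1} and \ref{local2}, together with the additive group structure $R\cong (\Z_{p^{a+1}})^{db}\times(\Z_{p^a})^{d(m-b)}$ from Result \ref{existence}.(iv) and the equivalence in Result \ref{equiv}, yield the claimed perfect arrays.
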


\medskip

We note that in the case $h\equiv 0 \pmod{6}$, both conditions (i) and (ii) are satisfied, as discussed in Remark \ref{exist1} and Remark \ref{exist2}. We have the following corollary.

\begin{cor}
Let $a$ and $e$ be nonnegative integers. Let $f$ and $h$ be positive integers such that $h\equiv 0 \pmod{6}$. Let $p$ be a prime. Then there exists a perfect $h$-phase array of size $p^{a+1}\times\cdots\times p^{a+1}\times p^a\times \cdots \times p^a$, where there are $2e$ terms $p^{a+1}$ and $2f$ terms $p^a$.
\end{cor}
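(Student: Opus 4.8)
The plan is to reduce the corollary to Theorem \ref{newperfectarray} by exhibiting, for any prescribed nonnegative integers $e$ and $f$ and any prime $p$, a finite local ring $R$ whose additive group is $\left(\Z_{p^{a+1}}\right)^{e}\times\left(\Z_{p^{a}}\right)^{f}$, and then checking that the divisibility hypothesis $h\equiv 0\pmod 6$ activates condition (ii) (equivalently (i)) of that theorem. Concretely, recall from Result \ref{existence}(iv) that the local ring produced there has additive group $\left(\Z_{p^{a+1}}\right)^{db}\times\left(\Z_{p^{a}}\right)^{d(m-b)}$, where $n=am+b$ with $0\le b\le m-1$. Theorem \ref{newperfectarray} then yields a perfect $h$-phase array of size with $2db$ terms equal to $p^{a+1}$ and $2d(m-b)$ terms equal to $p^{a}$. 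So the task is purely arithmetical: given $e$ and $f$, choose the parameters $d,m,n$ (hence $a,b$) so that $db=e$ and $d(m-b)=f$.

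First I would dispose of the degenerate case: if $e=f=0$ the statement is about a $0\times 0$ array (or trivially true), so assume $e+f\ge 1$. Then take $d=1$; I need $m$ and $b$ with $0\le b\le m-1$, $b=e$, and $m-b=f$, i.e.\ $m=e+f$ and $b=e$. The constraint $b\le m-1$ reads $e\le e+f-1$, i.e.\ $f\ge 1$; and $b\ge 0$ is automatic. If $f\ge 1$ this works directly with $n=am+b=a(e+f)+e$ (any $a\ge 0$ is allowed by Result \ref{existence}, and here $a$ is the prescribed one from the corollary). If instead $f=0$ (so $e\ge 1$), the exponent identity $d(m-b)=0$ forces $m=b$, which violates $b\le m-1$; the remedy is to pad: replace $(e,f)$ by $(e,1)$ is wrong since it changes the array, so instead I absorb the issue by noting $\left(\Z_{p^{a+1}}\right)^e$ can be realized with $d=1$, $m=e+1$, $b=e$, $n=a(e+1)+e$, which by Result \ref{existence}(iv) gives additive group $\left(\Z_{p^{a+1}}\right)^{e}\times\left(\Z_{p^{a}}\right)^{1}$ — not what we want. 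The cleanest fix is to allow $d>1$: pick any $m\ge 2$, set $d$ and $b$ via $b=e/\!\gcd$-type relations — actually the simplest uniform choice is $d=e+f$ when $\gcd(e,f)$ permits, but let me instead just observe that since we only need \emph{existence}, we may take $d=\gcd(e,f)$ when both are positive (then $b=e/d$, $m-b=f/d$, $m=(e+f)/d$, and $b\le m-1\iff f/d\ge 1$, true), and when $f=0$ take $d=e$, $m=1$, $b=0$, giving additive group $\left(\Z_{p^{a}}\right)^{0}$... which is wrong again.

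Let me state the correct resolution plainly, since this bookkeeping is the one genuinely fiddly point. The right move when $f=0$ is: we want $d(m-b)=0$ with $m>b$ impossible, so $f=0$ is simply \emph{not} in the image of the parametrization with $a\ge 1$; but the corollary as stated with $f=0$ asserts existence of a perfect array of size $p^{a+1}$ in $2e$ coordinates and \emph{no} $p^a$ coordinates. One realizes this by instead using a ring with parameter $b=m$ — i.e.\ taking $n$ a multiple of $m$, say $m\mid n$, so $b=0$, and then the additive group is $\left(\Z_{p^{a+1}}\right)^{0}\times\left(\Z_{p^{a}}\right)^{dm}$; that gives the all-$p^a$ case, the mirror situation. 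Symmetrically, the all-$p^{a+1}$ case $f=0$ is obtained by taking $n\equiv m-1$... I will resolve this in the writeup by simply choosing, for $f=0$, the parameters $d=e$, $m$ arbitrary with $m\ge 2$, and $b=m-1$ together with replacing the ambient $a$ by $a-1$ (legitimate when $a\ge 1$; the case $a=0$ is separate and handled by the mod-$6$ vanishing-sum construction directly on $\left(\Z_p\right)^{2e}$ via Theorem \ref{local1} with $R=\F_{p^e}$). In all remaining cases — $f\ge 1$, or $e=0$ — the choice $d=\gcd(e,f)$ (interpreting $\gcd(0,f)=f$), $m=(e+f)/d$, $b=e/d$ satisfies $0\le b\le m-1$, and $n=am+b$ produces via Result \ref{existence} a local ring $R$ with $\#R=p^{dn}$ and additive group exactly $\left(\Z_{p^{a+1}}\right)^{e}\times\left(\Z_{p^{a}}\right)^{f}$.

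With $R$ chosen, the second half is immediate: since $h\equiv 0\pmod 6$, both $2$ and $3$ divide $h$, so by the discussion in Remark \ref{exist1} every integer $N\ge 2$ lies in $2\mathbb{N}+3\mathbb{N}$, hence in $p_1\mathbb{N}+\cdots+p_r\mathbb{N}$; in particular $p^{d}\in p_1\mathbb{N}+\cdots+p_r\mathbb{N}$, which is condition (i). (Equally, by Remark \ref{exist2}, the required $h$th roots of unity satisfying \eqref{first}, \eqref{second}, \eqref{third} exist, giving (ii).) Therefore Theorem \ref{newperfectarray} applies and yields a perfect $h$-phase array of size $p^{a+1}\times\cdots\times p^{a+1}\times p^{a}\times\cdots\times p^{a}$ with $2db=2e$ copies of $p^{a+1}$ and $2d(m-b)=2f$ copies of $p^{a}$, which is exactly the asserted array. \textbf{The main obstacle} is not any deep mathematics but getting the parametrization $n=am+b$ of Result \ref{existence}(iv) to hit the prescribed exponent pair $(e,f)$ for \emph{all} nonnegative $e,f$ and all $a\ge 0$ — the boundary cases $f=0$ and $a=0$ need the small separate arguments sketched above rather than the generic $d=\gcd(e,f)$ choice.
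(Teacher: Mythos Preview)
Your overall approach is correct and matches the paper's: deduce the corollary from Theorem~\ref{newperfectarray} by choosing ring parameters $d,m,n$ so that $db=e$ and $d(m-b)=f$, and invoke Remarks~\ref{exist1}--\ref{exist2} to verify condition (i) (or (ii)) when $6\mid h$. However, most of your proposal is spent wrestling with a phantom case. Reread the hypotheses: the corollary assumes $f$ is a \emph{positive} integer, so $f\ge 1$ throughout. Your very first parametrization --- $d=1$, $m=e+f$, $b=e$, $n=a(e+f)+e$ --- already satisfies $0\le b\le m-1$ precisely because $f\ge 1$, and Result~\ref{existence} then produces a local ring $R$ with additive group $\bigl(\Z_{p^{a+1}}\bigr)^{e}\times\bigl(\Z_{p^{a}}\bigr)^{f}$. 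The entire discussion of $f=0$, the attempted shifts of $a$, the $\gcd$ choices, and the special handling via $R=\F_{p^e}$ can all be deleted.

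The only genuine edge case left is $a=e=0$, where $n=0$ is not a positive integer; but then the array has $2e=0$ factors $p^{a+1}$ and $2f$ factors $p^{a}=1$, i.e.\ it is a $1\times\cdots\times 1$ array, which is trivially perfect. With that one-line observation, the proof is complete and coincides with the paper's (implicit) argument.
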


\medskip

\noindent \textbf{Acknowledgment.} The author is grateful to Bernhard Schmidt for countless discussions and encouragement throughout this project. The second local-ring construction in this paper is inspired by his work on $\BH(\Z_{p^a}\times \Z_{p^a},h)$ matrices.

\end{document}